\newcommand{\copyrightnote}[2]{{\renewcommand{\thefootnote}{}
 \footnotetext{\small\it
\begin{flushleft}
 \copyright \ #1   #2  
\end{flushleft}}}}
\newcommand{\Name}[1]{\begin{flushleft}
                       \LARGE \bf #1
                       \end{flushleft}\vspace{-3mm}}
\newcommand{\Author}[1]{\begin{flushleft}
                       \it #1 \end{flushleft}}
\newcommand{\Address}[1]{\begin{flushleft}
                       \it #1 \end{flushleft}}
\newcommand{\Date}[1]{\begin{flushleft}
                      \small  \it #1 \end{flushleft}}
\newcommand{\evenhead}{Author \ name}
\newcommand{\oddhead}{Article \ name}
\renewcommand{\@evenhead}{
\hspace*{-3pt}\raisebox{-15pt}[\headheight][0pt]{\vbox{\hbox to \textwidth
{\thepage \hfil \evenhead}\vskip4pt \hrule}}}
\renewcommand{\@oddhead}{
\hspace*{-3pt}\raisebox{-15pt}[\headheight][0pt]{\vbox{\hbox to \textwidth
{\oddhead \hfil \thepage}\vskip4pt\hrule}}}
\renewcommand{\@evenfoot}{}
\renewcommand{\@oddfoot}{}
\long\def\@makecaption#1#2{%
  \vskip\abovecaptionskip
  \sbox\@tempboxa{\small \textbf{#1.}\ \ #2}%
  \ifdim \wd\@tempboxa >\hsize
    {\small \textbf{#1.}\ \ #2}\par
  \else
    \global \@minipagefalse
    \hb@xt@\hsize{\hfil\box\@tempboxa\hfil}%
  \fi
  \vskip\belowcaptionskip}
\newcommand{\JNMPnumberwithin}[3][\arabic]{%
  \@ifundefined{c@#2}{\@nocounterr{#2}}{%
    \@ifundefined{c@#3}{\@nocnterr{#3}}{%
      \@addtoreset{#2}{#3}%
      \@xp\xdef\csname the#2\endcsname{%
        \@xp\@nx\csname the#3\endcsname .\@nx#1{#2}}}}%
}
\renewenvironment{proof}[1][\proofname]{\par
  \normalfont
  \topsep6\p@\@plus6\p@ \trivlist
  \item[\hskip\labelsep\textbf{%
    #1\@addpunct{.}}]\ignorespaces
}{%
  \qed\endtrivlist
}
\newcommand{\resetfootnoterule} {
  \renewcommand\footnoterule{%
  \kern-3\p@
  \hrule\@width.4\columnwidth
  \kern2.6\p@}
}
\renewcommand{\footnoterule}{}
\theoremstyle{definition}
\newtheorem*{defn}{Definition}
\newtheorem{lem}{Lemma}[section]
\newtheorem{thm}[lem]{Theorem}
\newtheorem{prop}[lem]{Proposition}
\newtheorem{rem}[lem]{Remark}
\newcommand{\R}{\mathbb{R}}
\newcommand{\Z}{\mathbb{Z}}
\newcommand{\C}{\mathbb{C}}
\newcommand{\Q}{\mathbb{Q}}
\newcommand{\SL}{\mathrm{SL}}
\newcommand{\PSL}{\mathrm{PSL}}
\newcommand{\Id}{\mathrm{Id}}
\newcommand{\cI}{\mathcal{I}}
\newcommand{\Rc}{\mathcal{R}}
\def\l{\lambda}
\def\m{\mu}
\begin{document}

\renewcommand{\evenhead}{ {\LARGE\textcolor{blue!10!black!40!green}{{\sf \ \ \ ]ocnmp[}}}\strut\hfill Valentin Ovsienko}
\renewcommand{\oddhead}{ {\LARGE\textcolor{blue!10!black!40!green}{{\sf ]ocnmp[}}}\ \ \ \ \   Towards quantized complex numbers}

\thispagestyle{empty}
\newcommand{\FistPageHead}[3]{
\begin{flushleft}
\raisebox{8mm}[0pt][0pt]
{\footnotesize \sf
\parbox{150mm}{{Open Communications in Nonlinear Mathematical Physics}\ \  \ \ {\LARGE\textcolor{blue!10!black!40!green}{]ocnmp[}}
\quad Vol.1 (2021) pp
#2\hfill {\sc #3}}}\vspace{-13mm}
\end{flushleft}}

\FistPageHead{1}{\pageref{firstpage}--\pageref{lastpage}}{ \ \ Article}

\strut\hfill

\strut\hfill

\copyrightnote{The author(s). Distributed under a Creative Commons Attribution 4.0 International License}

\Name{Towards quantized complex numbers:
$q$-deformed Gaussian integers and the Picard group}

\Author{Valentin Ovsienko$^{\,1}$}

\Address{$^{1}$ Centre National de la Recherche Scientifique,
Laboratoire de Math\'ematiques de Reims, UMR9008 CNRS,
Universit\'e de Reims Champagne-Ardenne,
U.F.R. Sciences Exactes et Naturelles,
Moulin de la Housse - BP 1039,
51687 Reims cedex 2,
France, valentin.ovsienko@univ-reims.fr}

\Date{Received Date May 14, 2021; Accepted Date July 26, 2021}

\setcounter{equation}{0}

\begin{abstract}
\noindent 
This work is a first step towards a theory of ``$q$-deformed complex numbers''.
Assuming the invariance of the $q$-deformation under the action of the modular group
I prove the existence and uniqueness of the operator of translations by~$i$ compatible with this action.
Obtained in such a way $q$-deformed Gaussian integers have interesting properties and are related to the
Chebyshev polynomials.
\end{abstract}

\label{firstpage}


\section{Introduction}\label{IntroSec}

The notion of $q$-deformed  rational numbers was introduced in~\cite{SVRat}.
It was further extended to arbitrary real numbers in~\cite{SVRe}.
Several properties of $q$-numbers were studied in~\cite{SVCo,LMG,Ves}.
The unimodality conjecture formulated in~\cite{SVRat} was tackled in~\cite{CSS}.

The goal of this paper is to extend the $q$-deformation to complex numbers.
We show that this can be done in a unique way.
Already for the simplest case of Gaussian integers,
i.e., complex numbers with integer real and imaginary parts, 
the obtained $q$-deformation has quite nontrivial properties.
In particular, we observe an unexpected relation with the
Chebyshev polynomials.

We start the introduction by explaining in Sections~\ref{EulSec}--\ref{ModInvSec} the
approach that was used to define the notion of $q$-rationals.
The role of the modular group~$\PSL(2,\Z)$ is crucial.
Section~\ref{ImTransOpSec} explains the main idea of this work.

\subsection{Euler's $q$-integers and the matrices~$T_q$ and~$S_q$}\label{EulSec}

The notion of {\it $q$-deformed integers}
\begin{equation}
\label{Euler}
\left[n\right]_q:=\frac{1-q^n}{1-q},
\end{equation}
where~$n\in\Z$,
goes back to Euler.
The expression~\eqref{Euler} reads for~$n\geq0$
$$
\left[n\right]_q=1+q+q^2+\cdots+q^{n-1},
\qquad\qquad
\left[-n\right]_q=-q^{-1}-q^{-2}-\cdots-q^{-n}.
$$

Definition~\eqref{Euler} was used by Gauss to define $q$-binomial coefficients
that now play an important role in combinatorics (see~\cite{Sta}),
quantum algebra and quantum calculus (see~\cite{Kac}).

Euler's $q$-integers~\eqref{Euler} can be characterized by the recurrence
\begin{equation}
\label{EuRec}
\left[n+1\right]_q=q\left[n\right]_q+1
\end{equation}
and the initial condition~$\left[0\right]_q=0$.

Consider the standard action of the modular group $\PSL(2,\Z)$
on the space of rational functions in~$q$ defined by linear-fractional transformations.
More precisely, elements of~$\PSL(2,\Z)$ are represented by $2\times2$ matrices acting on the 
rational function $X=X(q)$ via
\begin{equation}
\label{LiFEq}
\begin{pmatrix}
a&b\\[2pt]
c&d
\end{pmatrix}X=
\frac{aX+b}{cX+d}.
\end{equation}
Recurrence~\eqref{EuRec} describes an action of the subgroup of integer translations;
this group is isomorphic to~$\Z$ and
generated by the matrix
\begin{equation}
\label{TransEq}
T_q=
\begin{pmatrix}
q&1\\[2pt]
0&1
\end{pmatrix}.
\end{equation}
The matrix~$T_q$ acts on the function~$X(q)$ via
$
T_q\,X=q\,X+1,
$
and recurrence~\eqref{EuRec} can be understood as the 
equivariance property 
$
\left[T(n)\right]_q=T_q(\left[n\right]_q),
$
where~$T$ is the translation operator $T(x)=x+1$, (see formula~\eqref{StanGenEq} below).

Furthermore, 
\begin{equation}
\label{InvEq}
S_q=
\begin{pmatrix}
0&-1\\[4pt]
q&0
\end{pmatrix}
\end{equation}
is the unique matrix that interchanges the ``smallest'' $q$-integers~$\left[-1\right]_q$ and $\left[1\right]_q$.
Therefore, the matrices $T_q$ and~$S_q$ appear already at the level of $q$-integers.
They play a crucial role in our approach.

\begin{rem}
Note that the matrix~$S_q$ is widely known in the physics literature
under the name of ``spinor metric''; see, e.g.,~\cite{VZW}. 
It can be used to characterize the usual quantum group~$\SL_q(2)$ as the group of symmetry of this metric.
However, I did not find in the literature a simultaneous use of the matrices~$T_q$ and~$S_q$ that generate
a~$\PSL(2,\Z)$-action.
\end{rem}

\subsection{The modular group action}\label{ModSec}

The modular group~$\PSL(2,\Z)$ has the standard generators
\begin{equation}
\label{StanGenEq}
T=
\begin{pmatrix}
1&1\\[2pt]
0&1
\end{pmatrix},
\qquad\qquad
S=
\begin{pmatrix}
0&-1\\[2pt]
1&0
\end{pmatrix},
\end{equation} 
satisfying the relations $S^2=(TS)^3=\Id$.
It transitively acts on~$\Q$, via linear-fractional transformations.

The $q$-deformation~\eqref{TransEq}-\eqref{InvEq} preserves the relations between the generators:
\begin{equation}
\label{RelationsEq}
S_q^2=\Id,
\qquad\qquad
(T_qS_q)^3=\Id.
\end{equation} 
as elements of~$\PSL(2,\Z[q])$.
Moreover, if~$T_q$ is chosen to be as in~\eqref{TransEq}, then
the matrix~$S_q$ can be characterized as the unique matrix satisfying~\eqref{RelationsEq}.
Therefore, $T_q$ and~$S_q$
generate an action of the same, ``undeformed'', group~$\PSL(2,\Z)$ 
by linear-fractional transformations on the space~$\Z(q)$ of rational functions $X=X(q)$:
\begin{equation}
\label{qAct}
T_q\,X=q\,X+1,
\qquad\qquad
S_q\,X=-\frac{1}{qX}.
\end{equation} 
It follows that, replacing~$T$ by~$T_q$ and~$S$ by~$S_q$,
then every $A\in\PSL(2,\Z)$ correctly defines a unique matrix~$A_q$ 
with coefficients polynomially depending on~$q$.

\subsection{Modular invariance characterizes $q$-rationals}\label{ModInvSec}

The notion of $q$-{\it rationals} was defined in~\cite{SVRat} in a combinatorial way.
It became clear later (see~\cite{LMG,SVCo,Ves}) that
the simplest, and perhaps most conceptual, 
way to define $q$-deformed rationals is to assume the modular invariance.

There exists a unique map
$$
\Q\longrightarrow\Z(q),
\qquad
x\mapsto\left[x\right]_q,
$$
that commutes with the $\PSL(2,\Z)$-action and sends~$0$ to~$0$.

Equivalently, the $q$-deformation~$\left[x\right]_q$ of every~$x\in\Q$ 
satisfies the following two linear recurrences
$$
\left[x+1\right]_q=q\left[x\right]_q+1,
\qquad\qquad
\left[-\frac{1}{x}\right]_q=
-\frac{1}{q\left[x\right]_q}.
$$
They express the invariance of the $q$-deformation 
under the action of the generators of~$\PSL(2,\Z)$.

Since $\PSL(2,\Z)$ acts transitively on $\Q$, 
the requirement of $\PSL(2,\Z)$-invariance implies the uniqueness of
the rational  function~$\left[x\right]_q$.
However, the existence of such a $q$-deformation (and its extension to~$\R$; see~\cite{SVRe})
is a non-trivial and quite remarkable fact.

\subsection{The starting point: elliptic points of~$\PSL(2,\Z)$}\label{StartSec}

Despite the fact that~$\PSL(2,\Z)$ consists of real matrices,
the assumption of $\PSL(2,\Z)$-invariance provides us with an infinite set
of $q$-deformed complex numbers.

Let $x\in\C$ be a fixed point of some element $A\in\PSL(2,\Z)$.
Such points are called {\it elliptic} and their classification is well-known; see, e.g.,~\cite{Shi}.
The invariance condition then reads
$$
\left[A(x)\right]_q=A_q(\left[x\right]_q)=\left[x\right]_q,
$$
so that $\left[x\right]_q$ is a fixed point of~$A_q$.
For instance, $\pm{}i$ are fixed points of~$S$, therefore
$\left[\pm{}i\right]_q$ are fixed points of~$S_q$.
One then obtains
\begin{equation}
\label{GivenI}
\left[i\right]_q:=\frac{i}{q^{\frac{1}{2}}},
\qquad\qquad
\left[-i\right]_q:=-\frac{i}{q^{\frac{1}{2}}}.
\end{equation}

\subsection{The space of rational functions in~$q^{\frac{1}{2}}$}

It follows from~\eqref{GivenI} that, in order to consider $q$-deformed complex numbers,
one needs to extend the space of rational functions in~$q$ to the space
$\C(q^{\frac{1}{2}})$ of rational functions in~$q^{\frac{1}{2}}$.
This is the space we will work with, the action of all of the operators and groups
that we consider will be defined on~$\C(q^{\frac{1}{2}})$.

Usually, we deal with operators of linear-fractional transformations,
as in~\eqref{LiFEq}, but sometimes we will have to add the parameter inversion
\begin{equation}
\label{TauEq}
\tau:q\longmapsto{}q^{-1},
\end{equation}
acting on the functions via $(\tau\,X)(q)=X(q^{-1})$.

\subsection{The operator of imaginary translations}\label{ImTransOpSec}

The main idea of the present paper is to consider the matrix~$U\in\PSL(2,\Z[i])$
and its square
\begin{equation}
\label{OpUEq}
U=\begin{pmatrix}
1&i\\[2pt]
0&1
\end{pmatrix},
\qquad\qquad
U^2=\begin{pmatrix}
1&2i\\[2pt]
0&1
\end{pmatrix}
\end{equation}
acting on the complex plane~$\C$ by imaginary translations, and find a $q$-deformation
$U_q$, compatible with the $\PSL(2,\Z)$-action.
We will show the existence of such a deformation and prove its uniqueness in
a natural class of linear-fractional operators.

The compatibility with the $q$-deformed $\PSL(2,\Z)$-action
consists of two properties.

\begin{enumerate}
\item[(a)]
The operator~$U_q$ must commute with~$T_q$;

\item[(b)]
$U_q$ must send~$\left[-i\right]_q$ to~$0$, and~$0$ to~$\left[i\right]_q$,
where~$\left[-i\right]_q$ and~$\left[i\right]_q$ are as in~\eqref{GivenI}.
\end{enumerate}

Surprisingly, it is much easier to work with the square of the operator~$U$.
We will prove (in Section~\ref{OtherSec}) the following.

\begin{thm}
\label{FirstThm}
There exists a unique element of the group of linear-fractional transformations of~$\C(q^{\frac{1}{2}})$,
commuting with~$T_q$ and sending~$\left[-i\right]_q$ to~$\left[i\right]_q$:
\begin{equation}
\label{Uq2}
(U^2)_{q}=
\begin{pmatrix}
1+i\,(q^{\frac{1}{2}}-q^{-{\frac{1}{2}}}) & 2i\,q^{-{\frac{1}{2}}}\\[8pt]
0&1-i\,(q^{\frac{1}{2}}-q^{-{\frac{1}{2}}})
\end{pmatrix}.
\end{equation}
\end{thm}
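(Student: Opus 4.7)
My plan is to exploit the rigidity imposed by commuting with $T_q$: this constraint cuts the space of $2\times 2$ matrices down to a one-parameter family in $\PSL(2)$, so the single interpolation condition $[-i]_q \mapsto [i]_q$ will pin down the unique representative. The whole argument is a short piece of linear algebra, and both existence and uniqueness drop out simultaneously.

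\textbf{Step 1 (centralizer of $T_q$).} I would first describe all matrices $M = \begin{pmatrix} a & b \\ c & d \end{pmatrix}$, with entries in $\Z(q^\half)[i]$, that commute with $T_q$. Equating $MT_q$ with $T_qM$ entry by entry gives, from the $(1,1)$-entry, $c=0$, and, from the $(1,2)$-entry, $d = a + (1-q)b$. The remaining entries are automatic. Thus the centralizer of $T_q$ consists of the upper-triangular matrices
$$
M = \begin{pmatrix} a & b \\ 0 & a + (1-q)b \end{pmatrix},
$$
and projectively this is a one-parameter family parametrized by the ratio $b/a$.

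\textbf{Step 2 (interpolation).} Next I impose $M([-i]_q) = [i]_q$ using $[\pm i]_q = \pm i/q^\half$ from~\eqref{GivenI}. The linear-fractional equation
$$
\frac{-ai/q^\half + b}{a + (1-q)b} = \frac{i}{q^\half}
$$
rearranges to the single linear relation $b\bigl(q^\half + i(q-1)\bigr) = 2ia$. Since the coefficient $q^\half + i(q-1)$ is a nonzero element of $\Z(q^\half)[i]$, this determines the ratio $b/a$ uniquely, establishing uniqueness. For existence, selecting the normalization $a = 1 + i(q^\half - q^{-\half})$ gives $b = 2iq^{-\half}$ and hence $d = a + (1-q)b = 1 - i(q^\half - q^{-\half})$, reproducing the matrix~\eqref{Uq2}. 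Finally a short check yields $\det M = 1 + (q^\half - q^{-\half})^2 = q + q^{-1} - 1 \not\equiv 0$, so $M$ is invertible and defines a genuine linear-fractional transformation.

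The main obstacle is essentially bookkeeping: one must be careful to work over a ring large enough to contain $q^{\pm\half}$ and $i$, and to verify that the two linear systems (the commutation equations and the interpolation equation) are non-degenerate so that the reduction from a four-dimensional ambient space to a single projective point is clean. Beyond that, the proof is the linear-algebra exercise above.
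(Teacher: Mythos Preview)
Your argument follows the paper's route: Lemma~\ref{TransCommLem} there likewise computes the centralizer of $T_q$ and then imposes the interpolation $[-i]_q\mapsto[i]_q$. One point, however, needs attention. Commutation of \emph{linear-fractional transformations} only means $MT_q=\lambda\,T_qM$ for some scalar $\lambda$, not literal matrix equality; by ``equating $MT_q$ with $T_qM$ entry by entry'' you have tacitly set $\lambda=1$. The paper's lemma allows proportionality and accordingly finds a second branch, its case~(ii), a non-triangular family with $c\neq0$, which your Step~1 does not see. That branch has determinant zero (the columns of the displayed matrix are proportional), so it never defines an invertible M\"obius map and your conclusion survives; but your uniqueness claim is not complete until you note either that $c\neq0$ forces singularity, or equivalently that every \emph{invertible} matrix projectively commuting with $T_q$ already commutes with it exactly. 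With that one sentence added, your proof and the paper's coincide.
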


It turns out that the ``square root'' of~\eqref{Uq2} has a different nature in the following sense.
There is no matrix with coefficients depending on~$q$ that commutes with~$T_q$ and
sends~$\left[-i\right]_q$ to~$0$, and~$0$ to~$\left[i\right]_q$.
In fact, the operator~$U_q$ inverses the parameter~$q$ in the argument.

\begin{defn}
The operator~$U_q$ acts on~$\C(q^{\frac{1}{2}})$ by the formula
\begin{equation}
\label{UqEq}
U_q\,X
:=\frac{X(q^{-1})+iq^{\frac{1}{2}}}{\left(1-q\right)X(q^{-1})+q}.
\end{equation}
\end{defn}

It is easy to check that~$U_q$ commutes with~$T_q$ and that it squares to~\eqref{Uq2} and satisfies (b).
Note also that~$U_q$ can be written in the matrix form
$$
U_q=
\begin{pmatrix}
1&iq^{\frac{1}{2}}\\[4pt]
1-q&q
\end{pmatrix}\circ\tau,
$$
where $\tau$ is the parameter inversion~\eqref{TauEq}.

\subsection{$q$-deformed Gaussian integers}\label{GIISec}
Gaussian integers are complex numbers
$m+ni$, with~$m,n\in\Z$.
It is natural to define $q$-deformed Gaussian integers as the orbit of $\left[0\right]_q=0$
in~$\C(q^{\frac{1}{2}})$ under the action of the
abelian group~$\Z^2$ generated by the translation operators~$T_q$ and~$U_q$.

These $q$-deformed Gaussian integers have quite interesting and nontrivial properties.
Their explicit formula (Theorem~\ref{ExpThm}) is obtained thanks to a new choice of the parameter:
$$
Q:=
\frac{2i\,q^{\frac{1}{2}}(q-1)}{q^2-q+1}-\frac{q^2-3q+1}{q^2-q+1}.
$$
The function $q\mapsto{}Q(q)$ is quite remarkable.
When~$q$ is real, $Q$ belongs to the unit circle, i.e.,~$Q\overline{Q}=1$.

I will also show that $q$-deformed Gaussian integers are related to the Chebyshev polynomials of second kind
(Theorem~\ref{GaussCheThm}) in a somewhat unexpected way.

\subsection{The $q$-deformed Picard group and $q$-continued fractions}\label{FulPicSec}

The group~$\PSL(2,\Z[i])$ of unimodular $2\times2$ matrices
with Gaussian integer coefficients, usually called the {\it Picard group}.
This group naturally acts on~$\Q[i]$, and we obtain a $q$-deformation of this action 
defined on~$\C(q^{\frac{1}{2}})$.
It turns out that all of the relations between the generators
remain unchanged in the $q$-deformed situation, 
except for one relation that has no $q$-analog.
The ``missing'' relation makes the complex situation much more complicated than in the real case
and leads to an extension of~$\PSL(2,\Z[i])$.
The $q$-deformation of complex numbers cannot be determined
assuming invariance under the full group~$\PSL(2,\Z[i])$.

\section{Elliptic points of~$\PSL(2,\Z)$}\label{RealComSec}

In this short section, we obtain the first information about $q$-deformed complex numbers
from the modular invariance.
It is due to the fact that some of the (quadratic, i.e., solutions of quadratic equations with integer coefficients) complex numbers,
for instance,~$i$, are fixed points of real matrices.

\subsection{Modular invariance and fixed points}\label{MISec}

I always assume that the $q$-deformation is $\PSL(2,\Z)$-invariant.

Since the $q$-deformed generators~$T_q$ and~$S_q$ given by~\eqref{TransEq} and~\eqref{InvEq}
satisfy the same relations~\eqref{RelationsEq},
the embedding
$$
\PSL(2,\Z)\hookrightarrow\PSL(2,\Z[q])
$$
is well defined and to every~$A\in\PSL(2,\Z)$ it assigns 
a matrix whose elements are polynomial in~$q$.

For every~$x\in\Q$, the $\PSL(2,\Z)$-invariance implies
\begin{equation}
\label{InvRecEq}
\left[A(x)\right]_q=A_q\left(\left[x\right]_q\right).
\end{equation}
If $x$ is a fixed point of~$A$, then $\left[x\right]_q$ has to be a fixed point of~$A_q$,
and can be determined in this way.

\subsection{Elliptic points of~$\PSL(2,\Z)$}\label{EllSec}

The standard action of $\PSL(2,\Z)$ on~$\C\cup\{\infty\}$ has a discrete set of points
with nontrivial stabilizer.
Such points are called {\it elliptic}.

\begin{prop}
\label{EllProp}
The set of elliptic points of $\PSL(2,\Z)$ in the upper half-plane is the
 $\PSL(2,\Z)$-orbit of~$i$ and the cube root of~$1$:
\begin{equation}
\label{EllPt}
\left\{
i,\;\frac{-1+\sqrt{3}i}{2}
\right\}.
\end{equation}
\end{prop}
This statement is classical
(see, e.g.,~\cite{Ser}, Chapter VII), let us outline its proof.

\begin{proof}
First, one checks that the points~\eqref{EllPt} are elliptic.
Indeed, the imaginary unit $i$  is a fixed point of~$S$,
while~$\frac{-1+\sqrt{3}i}{2}$ is fixed by~$ST$.

To prove that any other elliptic point in the upper half-plane is an image of one of the points~\eqref{EllPt}
 by an element of~$\PSL(2,\Z)$,
 note that $A\in\PSL(2,\Z)$ has a fixed point in the upper half-plane if and only if
 $A$ is an elliptic element of~$\PSL(2,\Z)$, that is, $\mathrm{tr}(A)=0$ or~$1$.
In this case, $A$ belongs to the conjugacy class of $S$, or~$ST$, respectively.
 \end{proof}

\begin{rem}
 Choose (the standard) fundamental domain of~$\PSL(2,\Z)$
$$
\mathcal{D}=
\left\{
z\in\C; \;\;\;
|z|>1, \; -{\frac{1}{2}}<\operatorname{Re}(z)<{\frac{1}{2}}
\right\}.
$$
There are no elliptic points points inside any fundamental domain,
since $\mathcal{D}$ has the empty intersection with its image under an element of~$\PSL(2,\Z)$.
Therefore, there are exactly three elliptic points on the border of $\mathcal{D}$:
\begin{center}
\begin{tikzpicture}[baseline=(current bounding box.north)]
\begin{scope}
    \clip (-2,0) rectangle (4,3);
    \draw (1,1.32) arc (60:120:2);
    \draw (-2,0) - - (2,0);
     \draw (-1,1.32) - - (-1,3);
     \draw (1,1.32) - - (1,3);
\end{scope}
\node[below left= -3mm of {(-1,1.32)}] {$\bullet$};
\node[below right= -3mm of {(1,1.32)}] {$\bullet$};
\node[below right= -3mm of {(0,1.58)}] {$\bullet$};
\node[below left= 1mm of {(-1.1,1.5)}] {$\frac{-1+\sqrt{3}i}{2}$};
\node[below right= 1mm of {(1.1,1.5)}] {$\frac{1+\sqrt{3}i}{2}$};
\node[above right= 1mm of {(100:1.7)}] {$i$};
\node[above = 0.1 mm of {(100:0)}] {$0$};
\end{tikzpicture}
\end{center}
\medskip

 The complex conjugate of the elliptic points are also fixed 
 points (of the same element of~$\PSL(2,\Z)$).
 In the sequel, two points, $i$ and~$-i$, will play a crucial role.
 \end{rem}

\subsection{The first examples of $q$-deformed complex numbers}\label{FESec}

The fixed point of~$S_q$, of~$S_qT_q$, and of~$T_qS_q$ are easily calculated, they are
$$
\left[\pm i\right]_q:=\pm\frac{i}{q^{\frac{1}{2}}},
\qquad\qquad
\left[\frac{-1\pm \sqrt{3}i}{2}\right]_q=\frac{-1\pm \sqrt{3}i}{2q},
\qquad\qquad
\left[\frac{1\pm \sqrt{3}i}{2}\right]_q=\frac{1\pm \sqrt{3}i}{2},
$$
respectively.
These are our first examples of $q$-deformed complex numbers.

\begin{rem}
Note that,
since the matrix~$T_qS_q$ does not depend of~$q$,
the points~$\frac{1\pm\sqrt{3}i}{2}$ remain undeformed.
Besides $0$ and~$1$, these are the only numbers with this property.
This indicates that the Eisenstein integers 
(i.e., the numbers of the form $n+m\,\frac{-1+\sqrt{3}i}{2}$, with~$n,m\in\Z$)
 should play some role in our approach.
However, in this paper, the considerations are restricted to the Gaussian integers.
The only elliptic points of~$\PSL(2,\Z)$ which are Gaussian integers are~$\pm i+n$, 
where~$n\in\Z$.
\end{rem}

\section{Imaginary translations}\label{ImTransSec}

In this section, we define the operator~$U_q$ acting on the field~$\C(q^{\frac{1}{2}})$,
we will interpret this~$U_q$ as the $q$-deformation of the operator~$U$ of translations by~$i$.
It turns out that there is no such operator~$U_q$ in the group of linear-fractional transformation, 
only its square~$U_q^2$, while~$U_q$ is more sophisticated.

\subsection{The operator of double imaginary translation}\label{OtherSec}

In this section, we prove Theorem~\ref{FirstThm}.
Let us start with the general form of the matrix of an operator commuting with the operator~$T_q$.

\begin{lem}
\label{TransCommLem}
The matrix of a linear-fractional operator~$A$ on~$\C(q^{\frac{1}{2}})$,
commuting with the operator~$T_q$ given by~\eqref{TransEq}, 
is of one of the following two forms

(i) A two-parameter family of triangular matrices
$$
A=
\begin{pmatrix}
a & b\\[4pt]
0&d
\end{pmatrix},
$$
where~$a$ and~$b$ are arbitrary functions in~$q^{\frac{1}{2}}$ and $d=a-(q-1)b$.

(ii) A one-parameter family of matrices proportional to
$$
A=
\begin{pmatrix}
1 & (q-1)^{-1}\\[4pt]
1-q&-1
\end{pmatrix}.
$$
\end{lem}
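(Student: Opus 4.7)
The plan is to parameterise a candidate operator by a matrix $A = \bigl(\begin{smallmatrix} \alpha & \beta \\ \gamma & \delta \end{smallmatrix}\bigr)$ with entries in $\Z(q)$, acting on $\Z(q)$ by a linear-fractional transformation. Since commutation of such transformations is a projective identity between matrices, the condition $A\circ T_q = T_q\circ A$ translates into $A\,T_q = \lambda\, T_q\,A$ for some nonzero $\lambda \in \Z(q)$, which must be determined along with the entries of $A$.

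Expanding both sides and comparing entries yields the system
\begin{equation*}
\alpha q = \lambda(\alpha q + \gamma),\qquad
\alpha + \beta = \lambda(\beta q + \delta),\qquad
\gamma q = \lambda \gamma,\qquad
\gamma + \delta = \lambda \delta.
\end{equation*}
The third equation factors as $\gamma(q-\lambda)=0$, naturally splitting the analysis into two exclusive cases which I expect to correspond exactly to the two families announced in the lemma.

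In the case $\gamma = 0$ I would argue that the fourth equation forces $\lambda = 1$ (the alternative $\delta = 0$ would make $A$ singular, hence not a projective transformation). The first equation is then automatic, and the second reduces to $\delta = \alpha - (q-1)\beta$, giving family~(i) with $a=\alpha$ and $b=\beta$ arbitrary. In the case $\gamma \neq 0$, the third equation forces $\lambda = q$; the fourth then gives $\gamma = (q-1)\delta$, the first gives $\gamma = (1-q)\alpha$, hence $\delta = -\alpha$, and the second reduces to $\alpha(q+1) = \beta(q-1)(q+1)$, i.e., $\beta = \alpha/(q-1)$. The matrix is thus determined by the overall scalar $\alpha$, recovering family~(ii) after normalisation.

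The only real conceptual care --- what I would call the main (if modest) obstacle --- is passing correctly from projective commutation to a matrix identity by introducing the auxiliary scalar $\lambda$; without allowing $\lambda$ to depend on $q$ one would miss the second family entirely. Once this is in place, each branch is a short linear computation.
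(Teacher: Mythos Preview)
Your proof is correct and follows essentially the same route as the paper: write the projective commutation as $AT_q=\lambda\,T_qA$, compare entries, and split on whether the lower-left entry vanishes, obtaining family~(i) when $\gamma=0$ and family~(ii) when $\gamma\neq0$. The only cosmetic difference is that the paper phrases the proportionality as $T_qA$ versus $AT_q$ and uses $a,b,c,d$ for the entries; the computations and case analysis are identical.
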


\begin{proof}
For an arbitrary matrix $A=
\begin{pmatrix}
a & b\\[4pt]
c&d
\end{pmatrix},$
one has
$$
T_qA=
\begin{pmatrix}
qa+c & qb+d\\[4pt]
c&d
\end{pmatrix},
\qquad\qquad
AT_q=
\begin{pmatrix}
qa & a+b\\[4pt]
qc&c+d
\end{pmatrix}
$$
If $c=0$, the condition that~$T_qA$ and~$AT_q$ are proportional implies
$qb+d=a+b.$
If~$c\not=0$, then 
$$
qa+c=a,
\qquad
qb+d=q^{-1}(a+b),
\qquad
d=q^{-1}(c+d).
$$
The first and the third equations give $c=(1-q)a=(q-1)d$, so that $d=-a$.
The second equation then leads to $a=(q-1)b$.

The lemma follows.
\end{proof}

Suppose now that a triangular matrix~$A$ from Lemma~\ref{TransCommLem} sends
$\left[-i\right]_q$ to $\left[i\right]_q$
(where $\left[-i\right]_q$ and $\left[i\right]_q$ are as in~\eqref{GivenI}).
One has the following condition
$$
\frac{-a\,i+b\,q^{\frac{1}{2}}}{d\,q^{\frac{1}{2}}}=
\frac{i}{q^{\frac{1}{2}}},
$$
so that $(a+d)i=b\,q^{\frac{1}{2}}$.
Substituting~$d$ as in Lemma~\ref{TransCommLem}, one has
$2a=(q-1-i\,q^{\frac{1}{2}})b$.
A matrix~$A$ of a linear-fractional transformation is defined modulo a scalar multiple,
i.e., the coefficient~$b$ thus can be chosen in an arbitrary way.

It follows that a matrix~$A$ from Lemma~\ref{TransCommLem}, Part~(i), satisfying the conditions of
Theorem~\ref{FirstThm}, is unique up to a scalar multiple.
It is now easy to check that the matrix~\eqref{Uq2} does satisfy them.

The matrix~$A$ from Lemma~\ref{TransCommLem}, part~(ii) does not send $\left[-i\right]_q$ to $\left[i\right]_q$.

Theorem~\ref{FirstThm} is proved.

\begin{rem}
Taking~$b=2i\,q^{-{\frac{1}{2}}}$, which is natural since we think of~$A$ as a $q$-analog 
of~$\begin{pmatrix}
1 & 2i\\[4pt]
0&1
\end{pmatrix},$
we obtain the unique matrix satisfying both conditions
(commuting with~$T_q$ and sending $\left[-i\right]_q$ to~$\left[i\right]_q$), namely
$$
\begin{pmatrix}
1+i\,(q^{\frac{1}{2}}-q^{-{\frac{1}{2}}}) & 2i\,q^{-{\frac{1}{2}}}\\[8pt]
0&1-i\,(q^{\frac{1}{2}}-q^{-{\frac{1}{2}}})
\end{pmatrix},
$$
that we denote by~$(U^2)_{q}$.
\end{rem}

\subsection{The operator $U_q$}\label{DefSec}

A straightforward attempt to calculate a square root of the operator~$(U^2)_{q}$ given by~\eqref{Uq2} fails.
There is no such operator inside the group of linear-fractional transformations.

\begin{lem}
\label{NoGoLem}
There is no linear-fractional operator $A$ on~$\C(q^{\frac{1}{2}})$ commuting with the operator~$T_q$ 
and sending $\left[-i\right]_q$ to~$0$, and~$0$ to~$\left[i\right]_q$.
\end{lem}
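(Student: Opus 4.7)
The plan is to exhaust all candidates for $A$ using Lemma \ref{TransCommLem}, which classifies matrices commuting with $T_q$ into two families, and to check that neither admits an element simultaneously satisfying $A(0) = [i]_q = iq^{-1/2}$ and $A([-i]_q) = A(-iq^{-1/2}) = 0$.

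I would dispose of the exceptional family (ii) of Lemma \ref{TransCommLem} first, since it offers no freedom beyond an overall scalar. Any nonzero scalar multiple of $\begin{pmatrix} 1 & (q-1)^{-1} \\ 1-q & -1 \end{pmatrix}$ sends $0$ to the projectively invariant value $1/(1-q)$, a rational function in $q$ with no factor of $i$. It cannot equal $iq^{-1/2}$ in any extension of $\Z[i]\,[q^{1/2}, q^{-1/2}]$, so this family is ruled out immediately.

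The bulk of the argument is with the triangular family (i). Writing $A = \begin{pmatrix} a & b \\ 0 & d \end{pmatrix}$ with $d = a - (q-1)b$, the condition $A(0) = iq^{-1/2}$ reads $bq^{1/2} = id$, while $A(-iq^{-1/2}) = 0$ reads $bq^{1/2} = ia$. Combined they force $a = d$, and then the diagonal compatibility yields $(q-1)b = 0$. Since $q-1$ is not a zero divisor in our Laurent polynomial ring, this forces $b = 0$, hence $a = d = 0$, and the matrix degenerates to zero. The whole argument is a routine linear-algebra cascade once Lemma \ref{TransCommLem} is available; there is no genuine obstacle beyond carefully tracking the diagonal constraint $d = a - (q-1)b$, without which one would mistakenly believe $a, b, d$ could be solved independently. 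Conceptually, though, this rigidity is precisely what forces the genuine $U_q$ of \eqref{UqEq} to sit outside the group of linear-fractional transformations over $\Z[i]\,[q^{1/2}, q^{-1/2}]$ and to pick up the substitution $q \mapsto q^{-1}$ in its argument.
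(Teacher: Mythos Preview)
Your proof is correct and follows essentially the same approach as the paper's: invoke Lemma~\ref{TransCommLem}, impose the two point conditions on the triangular family, and collapse to $(q-1)b=0$, hence $A=0$. You are in fact slightly more careful than the paper, which treats only case~(i) explicitly; your handling of case~(ii) is fine, though note that the matrix there is actually singular ($\det=0$), so it is already excluded from $\PSL(2,\Z[i]\,[q^{1/2},q^{-1/2}])$ on those grounds alone.
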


\begin{proof}
Let $A$ be as in Lemma~\ref{TransCommLem}.
The second condition implies two equations
$$
-a\,i+b\,q^{\frac{1}{2}}=0,
\qquad\qquad
\frac{b}{d}=\frac{i}{q^{\frac{1}{2}}}.
$$
Substituting $d=a-(q-1)b$, leads to $(q-1)b=0$, ad so~$b=0$, since~$q$ is a parameter.
But then~$A$ has to be zero, which is a contradiction.
\end{proof}

An alternative way to look for a square root of~$(U^2)_{q}$ is to adopt the assumption that~$U_q$ inverses the
parameter~$q$, as in~\eqref{TauEq}.
Let us consider the linear-fractional transformations composed with~$\tau$:
\begin{equation}
\label{GFU}
A\,X
:=\frac{aX(q^{-1})+b}{cX(q^{-1})+d},
\end{equation}
where $a,b,c,d$ are some functions in~$q$.

\begin{prop}
\label{GoGoLem}
The operator~\eqref{UqEq} is the unique operator on~$\C(q^{\frac{1}{2}})$
of the form~\eqref{GFU} commuting with~$T_q$ 
and sending $\left[-i\right]_q$ to~$0$, and~$0$ to~$\left[i\right]_q$.
\end{prop}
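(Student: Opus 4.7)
The plan is to parallel the proof of Theorem \ref{FirstThm}, but now for the enlarged class \eqref{GFU} of transformations, which act on $X(q)$ by first evaluating at $q^{-1}$ and then applying a linear-fractional map. The main structural difference from Lemma \ref{TransCommLem} is that \eqref{GFU} is not matrix multiplication: composition with $T_q$ on the left and on the right leads to different-looking rational expressions that must be reconciled.

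My first step is to expand both compositions. Writing $Y := X(q^{-1})$ as a placeholder, a direct calculation gives
$$
T_q\circ A(X(q))=\frac{(qa+c)Y+(qb+d)}{cY+d},\qquad A\circ T_q(X(q))=\frac{aq^{-1}Y+(a+b)}{cq^{-1}Y+(c+d)},
$$
the crucial point in the second formula being that $T_q$ sends $X(q)$ to $qX(q)+1$, so that inside $A$ the placeholder $Y$ is replaced by $q^{-1}Y+1$. Equality as rational functions of $Y$ amounts to proportionality of the two numerator/denominator pairs by some $\lambda = \lambda(q)$.

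My second step is the case analysis of the resulting four-equation system. Comparing coefficients of $Y$ in the denominators yields $c(q^{-1}-\lambda)=0$. When $c=0$ the remaining equations force $a=0$ or $d=0$, collapsing $A$ to a constant or an ill-defined operator; the only nondegenerate branch is therefore $\lambda=q^{-1}$, which then yields $c=a(1-q)$ and $d=qa$, with $a$ and $b$ still free (the equation for the constant term is redundant). Since scaling $(a,b,c,d)$ by a common function of $q$ does not change \eqref{GFU}, I projectivize and reduce to the one-parameter family
$$
A(X(q))=\frac{X(q^{-1})+\beta}{(1-q)X(q^{-1})+q},\qquad \beta=\beta(q).
$$

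My final step is to impose the two pointwise conditions. Since $[-i]_q=-iq^{-1/2}$, evaluating at $q^{-1}$ gives $Y=-iq^{1/2}$, and the vanishing of the numerator of $A([-i]_q)$ forces $\beta=iq^{1/2}$, producing precisely \eqref{UqEq}. The remaining condition $A(0)=[i]_q$ is then automatic, since the numerator at $X=0$ is $\beta=iq^{1/2}$ and the denominator is $q$, giving $A(0)=iq^{-1/2}=[i]_q$. The only genuine obstacle in this scheme is the case analysis at the commutation step: I must confirm that the $c=0$ branch is truly degenerate and does not secretly yield an alternative operator meeting both pointwise conditions, and that the proportionality argument is legitimate (i.e.\ that both denominators are nonzero as polynomials in $Y$), so that no spurious solution is overlooked.
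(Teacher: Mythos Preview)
Your proof is correct and follows essentially the same approach as the paper: compute the two compositions $T_q\circ A$ and $A\circ T_q$ as linear-fractional maps in $Y=X(q^{-1})$, impose proportionality of the representing matrices to obtain $c=(1-q)a$, $d=qa$, and then use the two pointwise conditions to pin down $b=iq^{1/2}a$. You are in fact slightly more careful than the paper, which asserts the relations $c=(1-q)a$, $d=qa$ without discussing the degenerate branch $c=0$; your observation that this branch collapses to a constant or ill-defined operator (hence cannot meet both pointwise conditions) fills that small gap.
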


\begin{proof}
The composition of~$A$ with~$T_q$ is represented by the matrices
$$
T_qA=
\begin{pmatrix}
qa+c & qb+d\\[4pt]
c&d
\end{pmatrix},
\qquad\qquad
AT_q=
\begin{pmatrix}
q^{-1}a & a+b\\[4pt]
q^{-1}c&c+d
\end{pmatrix}
$$
that are then to be applied to~$X(q^{-1})$.
The operators~$A$ and~$T_q$ commute means that the matrices are proportional.
This gives:
$$
c =(1-q)a,
\qquad\qquad
d=qa,
$$ 
with arbitrary $a$ and $b$.

The condition that~$A$ sends~$\left[-i\right]_q$ to~$0$, and~$0$ to~$\left[i\right]_q$ reads
$$
b=iq^{\frac{1}{2}} a,
\qquad\qquad
b=iq^{-{\frac{1}{2}}}d.
$$ 
The obtained system of four equation has a one-parameter family of solutions,
namely the coefficients of the matrix
$$
\begin{pmatrix}
a&iq^{\frac{1}{2}}a\\[4pt]
(1-q)a&qa
\end{pmatrix}.
$$
Proposition~\ref{GoGoLem} is proved.
\end{proof}

Let us finally check that the operator~$(U^2)_{q}$ given by~\eqref{Uq2} is, indeed, the square of~$U_q$.

\goodbreak

\begin{prop}
The composition of~$U_q$ with itself is the operator~\eqref{Uq2}.
\end{prop}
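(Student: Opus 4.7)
The plan is a direct verification by matrix multiplication, once the right composition rule for $q$-inverting operators is in place. The key point is that although $U_q$ is not a linear-fractional transformation in $X(q)$ (it involves $X(q^{-1})$), its square $U_q\circ U_q$ \emph{is} an ordinary linear-fractional transformation in $X(q)$, since the parameter inversion happens twice and cancels out. This is consistent with Theorem~\ref{FirstThm}, which represents $(U^2)_q$ by an ordinary matrix.

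First I would set $Y(q) := U_q(X(q))$ and write down the composition rule. Applying $U_q$ again requires $Y(q^{-1})$, which is obtained by substituting $q\mapsto q^{-1}$ in the closed-form expression for $Y(q)$; crucially, under this substitution $X(q^{-1})$ becomes $X(q)$. Therefore
\[
Y(q^{-1})=\frac{X(q)+iq^{-\half}}{(1-q^{-1})X(q)+q^{-1}},
\]
which is an ordinary linear-fractional transformation of $X(q)$ with matrix
\[
\hat{U}_{q^{-1}}=\begin{pmatrix}1 & iq^{-\half}\\[4pt] 1-q^{-1} & q^{-1}\end{pmatrix}.
\]
Then $U_q\bigl(Y(q)\bigr)=\dfrac{Y(q^{-1})+iq^{\half}}{(1-q)Y(q^{-1})+q}$ is obtained by applying $\hat{U}_q$ to $Y(q^{-1})$. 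Since both matrix actions are now ordinary linear-fractional actions on $X(q)$, the composition is represented by the product
\[
\hat{U}_q\cdot\hat{U}_{q^{-1}}=
\begin{pmatrix}1 & iq^{\half}\\[4pt] 1-q & q\end{pmatrix}
\begin{pmatrix}1 & iq^{-\half}\\[4pt] 1-q^{-1} & q^{-1}\end{pmatrix}.
\]

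The final step is to carry out this $2\times 2$ multiplication and check that the result equals~\eqref{Uq2}. The off-diagonal lower entry vanishes because $(1-q)+q(1-q^{-1})=0$, the upper-left entry gives $1+i(q^{\half}-q^{-\half})$, the upper-right entry gives $iq^{-\half}+iq^{-\half}=2iq^{-\half}$, and the lower-right entry gives $1-i(q^{\half}-q^{-\half})$, exactly matching the matrix $(U^2)_q$ of Theorem~\ref{FirstThm}.

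There is no real obstacle: the only subtle point is the composition rule for operators of the form~\eqref{GFU}, namely that composing two such operators yields an ordinary linear-fractional transformation whose matrix is $\hat{U}_q\cdot\hat{U}_{q^{-1}}$ rather than $\hat{U}_q\cdot\hat{U}_q$. Once this is spelled out, the proposition reduces to a one-line matrix product.
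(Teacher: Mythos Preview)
Your proof is correct and follows exactly the same approach as the paper: both verify the identity by computing the matrix product $\hat{U}_q\cdot\hat{U}_{q^{-1}}$ and checking that it equals $(U^2)_q$. Your version spells out more explicitly why the composition rule for $q$-inverting operators yields $\hat{U}_q\cdot\hat{U}_{q^{-1}}$ rather than $\hat{U}_q^2$, which is a useful clarification, but the underlying computation is identical.
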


\begin{proof}
One checks that
$$
\begin{pmatrix}
1&iq^{\frac{1}{2}}\\[4pt]
1-q&q
\end{pmatrix}
\circ\tau\circ
\begin{pmatrix}
1&iq^{\frac{1}{2}}\\[4pt]
1-q&q
\end{pmatrix}
\circ\tau
=\begin{pmatrix}
1&iq^{\frac{1}{2}}\\[4pt]
1-q&q
\end{pmatrix}
\begin{pmatrix}
1&iq^{-\frac{1}{2}}\\[4pt]
1-\frac{1}{q}&\frac{1}{q}
\end{pmatrix}
=(U^2)_{q}.
$$
This means that the operator~\eqref{Uq2} is the composition of the operator $U_q$ with itself.
\end{proof}

\section{Introducing $q$-deformed Gaussian integers}\label{GauSSec}

Our goal in this section is to calculate explicit formulas
and linear recurrences for $q$-deformed Gaussian integers,
defined as the orbit of~$0$ in~$\C(q^{\frac{1}{2}})$ under the $\Z^2$-action generated by~$T_q$ and~$U_q$:
$$
\left[ni+m\right]_q:=
T_q^mU_q^n\left(0\right).
$$
Since the action of~$T$ is given by a simple expression
$$
T^m(X(q))=q^m\,X(q)+\left[m\right]_q,
$$
where~$\left[m\right]_q$ is the Euler $q$-integer~\eqref{Euler},
it suffices to calculate $q$-deformed purely imaginary Gaussian integers~$\left[ni\right]_q$.

To simplify the exposition and immediately explain the nature of these $q$-numbers,
let us start with examples, the general formulas and recurrences will be calculated after that.

\subsection{A list of small Gaussian integers}\label{ImInSec}

The first examples are
\begin{eqnarray*}
\left[i\right]_q &=&
i\,q^{-{\frac{1}{2}}},
\\[6pt]
\left[2i\right]_q &=&
\frac{2q\left[i\right]_q}{q^2-q+1}-
\frac{2(q-1)}{q^2-q+1},
\\[6pt]
\left[3i\right]_q &=&
-\left[i\right]_q+
\frac{4q\left[i\right]_q}{q^2-q+1}-
\frac{4(q-1)}{q^2-q+1},
\\[6pt]
\left[4i\right]_q &=&
-\frac{4q\left[i\right]_q}{q^2-q+1}+
\frac{8q^2\left[i\right]_q}{(q^2-q+1)^2}-
\frac{8q(q-1)}{(q^2-q+1)^2},
\\[6pt]
\left[5i\right]_q &=&
\left[i\right]_q-
\frac{12q\left[i\right]_q}{q^2-q+1}+
\frac{16q^2\left[i\right]_q}{(q^2-q+1)^2}+
\frac{4(q-1)}{(q^2-q+1)}-
\frac{16q(q-1)}{(q^2-q+1)^2},
\end{eqnarray*}
\begin{eqnarray*}
\left[6i\right]_q &=&
\frac{6q\left[i\right]_q}{q^2-q+1}-
\frac{32q^2\left[i\right]_q}{(q^2-q+1)^2}+
\frac{32q^3\left[i\right]_q}{(q^2-q+1)^3}
\\[6pt]
&&
\hskip3cm
-\frac{2(q-1)}{(q^2-q+1)}
+\frac{16q(q-1)}{(q^2-q+1)^2}
-\frac{32q^2(q-1)}{(q^2-q+1)^3},
\\[12pt]
\left[7i\right]_q &=&
-\left[i\right]_q+
\frac{24q\left[i\right]_q}{q^2-q+1}-
\frac{80q^2\left[i\right]_q}{(q^2-q+1)^2}+
\frac{64q^3\left[i\right]_q}{(q^2-q+1)^3}
\\[6pt]
&&
\hskip3cm
-\frac{8(q-1)}{(q^2-q+1)}+
\frac{48q(q-1)}{(q^2-q+1)^2}-
\frac{64q^2(q-1)}{(q^2-q+1)^3}
\end{eqnarray*}
\begin{eqnarray*}
\left[8i\right]_q &=&
-\frac{8q\left[i\right]_q}{q^2-q+1}+
\frac{80q^2\left[i\right]_q}{(q^2-q+1)^2}-
\frac{192q^3\left[i\right]_q}{(q^2-q+1)^3}+
\frac{128q^4\left[i\right]_q}{(q^2-q+1)^4}
\\[6pt]
&&
\hskip3cm
-\frac{32q(q-1)}{(q^2-q+1)^2}+
\frac{128q^2(q-1)}{(q^2-q+1)^3}-
\frac{128q^3(q-1)}{(q^2-q+1)^4}
\\[12pt]
\left[9i\right]_q &=&
\left[i\right]_q-
\frac{40q\left[i\right]_q}{q^2-q+1}+
\frac{240q^2\left[i\right]_q}{(q^2-q+1)^2}-
\frac{448q^3\left[i\right]_q}{(q^2-q+1)^3}+
\frac{256q^4\left[i\right]_q}{(q^2-q+1)^4}
\\[6pt]
&&
\hskip2.5cm
+\frac{8(q-1)}{(q^2-q+1)}-
\frac{112q(q-1)}{(q^2-q+1)^2}+
\frac{320q^2(q-1)}{(q^2-q+1)^3}-
\frac{256q^3(q-1)}{(q^2-q+1)^4}.
\end{eqnarray*}

One observes that the coefficients of the imaginary parts $\operatorname{Im}(\left[ni\right]_q)$ form a triangle that starts as follows
$$
\begin{array}{rccccl}
1\\
2\\
-1 & 4\\
-4 & 8 \\
1 & -12 & 16\\
6 & -32 & 32\\
-1 & 24 & -80 & 64\\
\cdots
\end{array}
$$
Quite remarkably, this triangle coincides with the triangle of coefficients of Chebyshev polynomials of second type;
see Sequences~A008312, A053117 of OEIS~\cite{OEIS}.
The coefficients of the real part are also connected to the Chebyshev polynomials.
The precise connection to Chebyshev polynomials will be explained in Section~\ref{ChesSec}.

\subsection{The new parameter~$Q$}\label{NewParSec}

Let us describe a new choice of the parameter of deformation.
Instead of the parameter~$q$, we will use the parameter
\begin{equation}
\label{LinRecCoEq}
Q:=
\frac{2i\,q^{\frac{1}{2}}(q-1)}{q^2-q+1}-\frac{q^2-3q+1}{q^2-q+1}.
\end{equation}
One can understand~$Q$ as a formal parameter, but it will also be useful
to think of it as a (two-valued) function in~$q$.

The parameter~$Q$ has several nice properties.
\begin{enumerate}
\item[(a)]
If the initial parameter of deformation~$q$ is real, then
\begin{equation}
\label{QInvEq}
Q^{-1}=\overline{Q},
\end{equation}

\item[(b)]
The two-valued function $q\to{}Q(q)$ sends the interval $\left[0,1\right]$ to two unit half-circles,
depending on the choice of the sign of~$q^{\frac{1}{2}}=\pm\sqrt{q}$.
Below is the positive branch.
\end{enumerate}

\begin{center}
\begin{tikzpicture}[baseline=(current bounding box.north)]

\begin{scope}
    \clip (-1.5,0) rectangle (1.5,1.5);
    \draw (0,0) circle(1.5);
    \draw[dashed] (-1.5,0) -- (1.5,0);
\end{scope}
%
\node[below right= -3mm of {(0,1.5)}] {$\bullet$};
\node[below left= 1mm of {(-1.5,0)}] {$Q(0)=-1$};
\node[below right= 1mm of {(1.5,0)}] {$Q(1)=1$};
\node[above right= 1mm of {(120:1.7)}] {$Q(\frac{3-\sqrt{5}}{2})=i$};
\end{tikzpicture}

\end{center}

\goodbreak

\subsection{The explicit formula and linear recurrence}\label{LinRecSec}

Let us give an explicit formula for the $q$-deformed Gaussian integers~$\left[ni\right]_q$.

If~$n\in\Z$, we use the standard notation
$$
\left[n\right]_Q:=\frac{1-Q^n}{1-Q}
$$
for Euler's $Q$-integers.
As before, the $q$-deformed imaginary unit is~$\left[i\right]_q=iq^{-{\frac{1}{2}}}$.

The main result of this section is the following.

\begin{thm}
\label{ExpThm}
For every~$n\in\Z$, one has
\begin{eqnarray}
\label{ExpEvEq}
\left[2ni\right]_q & = &
\left[2\right]_Q\left[n\right]_Q\left[i\right]_q,\\[6pt]
\label{ExpOddEq}
\left[\left(2n-1\right)i\right]_q & = &
\left(
\left[2\right]_Q\left[n\right]_Q-Q^n\right)\left[i\right]_q.
\end{eqnarray}
\end{thm}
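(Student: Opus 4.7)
The plan is to exploit the fact that the matrix $(U^2)_q$ from Theorem~\ref{FirstThm} is upper triangular, so $(U^2)_q$ acts on the variable $X$ as a genuine affine transformation $X \mapsto QX + c$ for constants $Q$ and $c$ depending on $q$; iterating this affine map then yields both formulas at once.

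First I would set $\alpha := 1 + i(q^{\half} - q^{-\half})$, $\beta := 2iq^{-\half}$, and $\delta := 1 - i(q^{\half} - q^{-\half})$, so that the action reads $X \mapsto (\alpha/\delta)\,X + \beta/\delta$. A direct computation, multiplying $\alpha/\delta$ by $(1+iu)/(1+iu)$ with $u = q^{\half} - q^{-\half}$ and using $u^2 = q - 2 + q^{-1}$, should recover exactly the expression (\ref{LinRecCoEq}) for $Q$, thereby establishing $Q = \alpha/\delta$. Since $\alpha + \delta = 2$, we then have $1 + Q = 2/\delta$, so $[2]_Q = 2/\delta$ and the additive constant becomes $\beta/\delta = 2iq^{-\half}/\delta = [2]_Q\,[i]_q$.

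With this in hand, iterating the affine map from $X_0 = 0$ gives
\[
[2ni]_q = ((U^2)_q)^n(0) = (1 + Q + \cdots + Q^{n-1})\,[2]_Q\,[i]_q = [2]_Q\,[n]_Q\,[i]_q,
\]
which is (\ref{ExpEvEq}). For the odd case I would use that $U_q$ sends $0$ to $[i]_q$ by construction, so $[(2n-1)i]_q = ((U^2)_q)^{n-1}(U_q(0)) = ((U^2)_q)^{n-1}([i]_q)$. Iterating the same affine map from $X_0 = [i]_q$ produces $Q^{n-1}[i]_q + [n-1]_Q\,[2]_Q\,[i]_q$, and the elementary identity $Q^{n-1} + [n-1]_Q(1+Q) = [n]_Q(1+Q) - Q^n$ (both sides equal $1 + 2Q + 2Q^2 + \cdots + 2Q^{n-1}$) delivers formula (\ref{ExpOddEq}).

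The only real obstacle is the algebraic verification that $\alpha/\delta$ coincides with the rather baroque $Q$ defined in (\ref{LinRecCoEq}); once this identification is made, everything else reduces to iterating an affine map and manipulating a geometric series. For $n \leq 0$, the same argument applies after observing that $((U^2)_q)^{-1}$ acts as $X \mapsto Q^{-1}X + c'$ with matching additive constant, so the formulas extend to all integers via the standard extension of $[n]_Q = (1 - Q^n)/(1 - Q)$ to negative indices.
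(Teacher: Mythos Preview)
Your proof is correct and follows essentially the same strategy as the paper: both recognise that $(U^2)_q$ acts on $X$ as the affine map $X\mapsto QX + [2]_Q[i]_q$, with $Q=\alpha/\delta$ coinciding with~\eqref{LinRecCoEq}, and then recover $[ni]_q$ from this recurrence and the known initial values. The only difference is in execution: the paper first converts the affine recurrence into the second-order linear recurrence $[(n+2)i]_q=(Q+1)[ni]_q-Q[(n-2)i]_q$ and solves it via the characteristic polynomial, whereas you iterate the first-order affine map directly via the geometric series. Your route is slightly more economical, but the content is the same.
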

Recall also that the general expression for the $q$-deformed Gaussian integers read
$$
\left[m+ni\right]_q=
q^m\left[ni\right]_q+\left[m\right]_q.
$$

\begin{rem}
More explicitly, the coefficients in~\eqref{ExpEvEq} and~\eqref{ExpOddEq} can be rewritten in the form
$$
\left[2\right]_Q\left[n\right]_Q=\frac{1-Q^n}{1-Q}+\frac{Q-Q^{n+1}}{1-Q},
\qquad\qquad
\left[2\right]_Q\left[n\right]_Q-Q^n=\frac{1-Q^n}{1-Q}+\frac{Q-Q^n}{1-Q},
$$
respectively. 
These are complex rational functions in~$q$ that can be calculated substituting~\eqref{LinRecCoEq}.
Below are some examples of $q$-integers $\left[ni\right]_q $
rewritten in terms of the parameter~$Q$.

\medskip

\begin{center}
\label{TheTab}
\begin{tabular}{ c||c|c|c|c|c|c|c} 
$n$ & -3 & -2 & -1 & 0 & 1 & 2 & 3 \\
\hline
$\left[ni\right]_q $ &  $-\left(1+2Q^{-1}\right)\left[i\right]_q$ & $-\left(1+Q^{-1}\right)\left[i\right]_q$ & $-\left[i\right]_q$ & 
$0$ & $\left[i\right]_q$ & $\left(1+Q\right)\left[i\right]_q$ & $\left(1+2Q\right)\left[i\right]_q$ 
 \\ 
\end{tabular}
\end{center}
\end{rem}
\medskip

\begin{proof}
One will need the following.

\begin{lem}
\label{LinRecProp}
For every~$n\in\Z$, the sequence~$\left(\left[ni\right]_q\right)_{n\in\Z}$ 
satisfies the following linear recurrence with constant coefficients
\begin{equation}
\label{LinRecEq}
\left[\left(n+2\right)i\right]_q=
\left(Q+1\right)\left[ni\right]_q-
Q\,\left[\left(n-2\right)i\right]_q.
\end{equation}
\end{lem}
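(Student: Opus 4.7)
The plan is to derive the recurrence directly from the explicit form of the matrix $(U^2)_q$ in Theorem~\ref{FirstThm}, by iterating it on the sequence $a_n:=[ni]_q$ and eliminating the inhomogeneous constant term.

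First I observe that, since $U_q^2=(U^2)_q$, the iterates satisfy
$$
a_{n+2}=(U^2)_q(a_n)=\frac{\alpha\,a_n+2iq^{-\half}}{\beta},
$$
where $\alpha:=1+i(q^\half-q^{-\half})$ and $\beta:=1-i(q^\half-q^{-\half})$ are the diagonal entries of~\eqref{Uq2}. This is the key input: because $(U^2)_q$ is upper triangular, the induced map on the real line of~$\Z(q)$ is an affine (not merely projective) function of~$a_n$. Clearing the denominator gives the \emph{inhomogeneous} linear recurrence $\beta\,a_{n+2}-\alpha\,a_n=2iq^{-\half}$.

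Next I would eliminate the constant term by writing the same identity at index~$n-2$ and subtracting:
$$
\beta\,a_{n+2}-\alpha\,a_n=\beta\,a_n-\alpha\,a_{n-2},
$$
which rearranges to
$$
a_{n+2}=\frac{\alpha+\beta}{\beta}\,a_n-\frac{\alpha}{\beta}\,a_{n-2}.
$$
Setting $Q:=\alpha/\beta$, this is exactly the recurrence~\eqref{LinRecEq}.

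The only remaining point, and the main (though routine) obstacle, is to verify that this ratio $\alpha/\beta$ coincides with the expression~\eqref{LinRecCoEq}. I would rationalize by multiplying numerator and denominator of
$$
\frac{1+i(q^\half-q^{-\half})}{1-i(q^\half-q^{-\half})}
$$
by the formal conjugate $1+i(q^\half-q^{-\half})$. The new denominator becomes $1+(q^\half-q^{-\half})^2=q^{-1}(q^2-q+1)$, while the numerator expands to $(1+i(q^\half-q^{-\half}))^2=q^{-1}(-(q^2-3q+1))+2iq^{-\half}(q-1)$. Dividing yields precisely
$$
Q=-\frac{q^2-3q+1}{q^2-q+1}+\frac{2i\,q^\half(q-1)}{q^2-q+1},
$$
as claimed. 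The argument is uniform in $n\in\Z$ since the same matrix identity $a_{n+2}=(U^2)_q(a_n)$ holds in both directions (note $(U^2)_q$ is invertible, with $\alpha,\beta\neq 0$ as formal functions of~$q$), so the recurrence propagates to all integers.
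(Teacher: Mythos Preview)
Your proof is correct and follows essentially the same route as the paper: both start from the affine recurrence $a_{n+2}=\dfrac{\alpha}{\beta}\,a_n+\dfrac{2iq^{-1/2}}{\beta}$ coming from the upper-triangular matrix~\eqref{Uq2}, and then pass to the homogeneous second-order recurrence in~$Q=\alpha/\beta$. The paper simply asserts that the affine relation ``readily implies''~\eqref{LinRecEq} with~$Q$ as in~\eqref{LinRecCoEq}, whereas you spell out both the elimination of the constant term and the verification that $\alpha/\beta$ equals~\eqref{LinRecCoEq}; your version is more detailed but not different in substance.
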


\noindent
{\it Proof of the lemma.}
This is a direct consequence of the definition of Gaussian $q$-integers.
Indeed, one has
$\left[\left(n+2\right)i\right]_q=U^2(\left[ni\right]_q)$,
where the operator~$U^2$ is given by~\eqref{Uq2}.
One therefore has an affine recurrence
$$
\left[\left(n+2\right)i\right]_q=
\frac{1+i(q^{\frac{1}{2}}-q^{-{\frac{1}{2}}})}{1-i(q^{\frac{1}{2}}-q^{-{\frac{1}{2}}})}\left[ni\right]_q
+\frac{2\left[i\right]_q}{1-i(q^{\frac{1}{2}}-q^{-{\frac{1}{2}}})},
$$
that readily implies~\eqref{LinRecEq}, with~$Q$ as in~\eqref{LinRecCoEq}.
\qed

\bigskip

Recurrence~\eqref{LinRecEq} determines~$\left[ni\right]_q$ from the
 values $(\left[0\right]_q,\left[2i\right]_q)$ and $(\left[-i\right]_q,\left[i\right]_q)$,
 for~$n$ even and odd, respectively.
 
 \begin{lem}
\label{EasyLem}
 The general solution of~\eqref{LinRecEq} is of the form
\begin{equation}
\label{SolLinRecEq}
\left[2n\,i\right]_q=
\l(q)\,Q^n+\m(q),
\qquad\qquad
\left[\left(2n-1\right)i\right]_q=
\tilde\l(q)\,Q^n+\tilde\m(q)
\end{equation}
where~$\l(q),\m(q),\tilde\l(q),\tilde\m(q)$ are arbitrary functions in~$q$.
\end{lem}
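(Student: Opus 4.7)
The plan is to recognize the recurrence \eqref{LinRecEq} as a linear recurrence with constant coefficients of step two in the index $n$, and then to split the analysis into the even and odd sub-sequences, each of which obeys an ordinary three-term linear recurrence.

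More precisely, I would set $b_m := \left[2mi\right]_q$ and $c_m := \left[(2m-1)i\right]_q$. Substituting $n=2m$ (resp.\ $n=2m-1$) into \eqref{LinRecEq} gives
$$
b_{m+1} = (Q+1)\,b_m - Q\,b_{m-1},
\qquad
c_{m+1} = (Q+1)\,c_m - Q\,c_{m-1}.
$$
Both sub-sequences satisfy the same homogeneous second-order linear recurrence with constant (in $m$) coefficients. Its characteristic polynomial is
$$
x^2 - (Q+1)x + Q = (x-1)(x-Q),
$$
whose roots are $1$ and $Q$.

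Assuming $Q \neq 1$ (which is the generic situation, as $Q$ is treated as a parameter; the degenerate value $q=1$ can be handled separately or excluded since $Q(q)$ is a non-constant function of $q$), the two roots are distinct, so the general solution of each recurrence is a linear combination of $1^m$ and $Q^m$:
$$
b_m = \lambda(q)\, Q^m + \mu(q),
\qquad
c_m = \tilde\lambda(q)\, Q^m + \tilde\mu(q),
$$
with coefficients depending only on $q$ (not on $m$), determined by the two initial values in each parity class. This is precisely the claim \eqref{SolLinRecEq}. The main, and essentially only, step is the factorization of the characteristic polynomial; there is no real obstacle here, as the recurrence is linear with constant coefficients and its characteristic polynomial factors over $\mathbb{Z}[Q]$.
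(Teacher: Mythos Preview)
Your proof is correct and follows essentially the same approach as the paper: factor the characteristic polynomial $x^2-(Q+1)x+Q=(x-1)(x-Q)$ and read off the general solution. Your explicit separation into the even and odd sub-sequences $b_m$ and $c_m$ is a helpful clarification of what the paper leaves implicit, but the argument is the same.
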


\noindent
{\it Proof of the lemma.}
The characteristic polynomial of the linear recurrence~\eqref{LinRecEq} is
$x^2-\left(Q+1\right)x+Q.$
It has has the solutions: $x=Q$ and~$x=1$.
Hence~\eqref{SolLinRecEq}.
\qed

\bigskip

Since $\left[0\right]_q=0$, and $\left[2i\right]_q=\left(Q+1\right)\left[i\right]_q$ 
(cf. the examples of Section~\ref{ImInSec}), we obtain
$$
\l(q)=\frac{Q+1}{Q-1}\left[i\right]_q,
\qquad\qquad
\m(q)=-\frac{Q+1}{Q-1}\left[i\right]_q,
$$
for~$n$ even.
Substituting these functions to~\eqref{SolLinRecEq} implies~\eqref{ExpEvEq}.

Similarly, since $\left[-i\right]_q=-\left[i\right]_q$, we obtain
$$
\tilde\l(q)=\frac{2}{Q-1}\left[i\right]_q,
\qquad\qquad
\tilde\m(q)=-\frac{Q+1}{Q-1}\left[i\right]_q,
$$
for~$n$ odd, and this entails~\eqref{ExpOddEq}.

Theorem~\ref{ExpThm} is proved.
\end{proof}

\subsection{A property of complex conjugation}\label{PerSec}
Here, again, we assume that~$q$ is a real parameter.

\begin{prop}
\label{Beowulf}
The $q$-numbers $\left[-ni\right]_q$ and~$\left[ni\right]_q$ are complex conjugate of each other:
$$
\left[-ni\right]_q=
 \overline{\left[ni\right]_q}.
$$
\end{prop}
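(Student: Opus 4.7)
The plan is to exploit the linear recurrence \eqref{LinRecEq} for $\left(\left[ni\right]_q\right)_{n\in\Z}$ together with the identity $\overline{Q}=Q^{-1}$, which holds for real $q$ by \eqref{QInvEq}. Define the auxiliary sequence $h(n):=\overline{\left[-ni\right]_q}$; the goal is then to verify $h(n)=\left[ni\right]_q$ for all $n\in\Z$.

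First I would show that $h$ satisfies the same second-order linear recurrence as $\left[ni\right]_q$. Substituting $n\mapsto -n$ in Lemma~\ref{LinRecProp} gives
$$
\left[-(n-2)i\right]_q=(Q+1)\left[-ni\right]_q-Q\left[-(n+2)i\right]_q.
$$
Taking complex conjugates and applying $\overline{Q}=Q^{-1}$ yields $h(n-2)=(1+Q^{-1})h(n)-Q^{-1}h(n+2)$; multiplying through by $-Q$ and rearranging recovers
$$
h(n+2)=(1+Q)h(n)-Q\,h(n-2),
$$
which is precisely \eqref{LinRecEq}.

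Since the recurrence splits into independent second-order recurrences on the even- and odd-indexed subsequences, it suffices to match four initial values. Using that $\left[i\right]_q=iq^{-\half}$ is purely imaginary (so $\overline{\left[i\right]_q}=-\left[i\right]_q$) together with the small-index table at the end of Section~\ref{LinRecSec}, I would compute $h(0)=0=\left[0\right]_q$ and $h(2)=\overline{-(1+Q^{-1})\left[i\right]_q}=(1+Q)\left[i\right]_q=\left[2i\right]_q$ for the even part, and $h(1)=\overline{-\left[i\right]_q}=\left[i\right]_q$, $h(-1)=\overline{\left[i\right]_q}=-\left[i\right]_q=\left[-i\right]_q$ for the odd part. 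Uniqueness of the solution to a second-order linear recurrence given two consecutive same-parity values then forces $h\equiv \left[\,\cdot\, i\right]_q$.

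The only substantive step is checking that the conjugated recurrence, after clearing $Q^{-1}$, reproduces \eqref{LinRecEq} on the nose; this hinges entirely on the unit-circle property $Q\overline{Q}=1$, so the identity \eqref{QInvEq} does all the real work. Everything else is routine initial-value bookkeeping, making this the natural conceptual proof (as opposed to the alternative of directly conjugating the explicit formulas \eqref{ExpEvEq}–\eqref{ExpOddEq} and verifying the resulting identity $[n]_{Q^{-1}}=Q^{1-n}[n]_Q$ by hand, which is more computational).
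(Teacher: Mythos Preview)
Your proof is correct and follows essentially the same approach as the paper: both arguments exploit the linear recurrence~\eqref{LinRecEq}, the unit-circle identity $\overline{Q}=Q^{-1}$, and a check of initial values. The only organizational difference is that the paper first isolates the purely algebraic identity $\left[-ni\right]_q(Q)=-\left[ni\right]_q(Q^{-1})$ (viewing the $q$-integers as functions of $Q$ times $\left[i\right]_q$) and then applies conjugation as a separate step, whereas you fold conjugation directly into the recurrence for $h(n)$; the underlying ideas are identical.
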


\begin{proof}
First, we observe that
\begin{equation}
\label{ComCoEq}
\left[-ni\right]_q(Q) = -\left[ni\right]_q(Q^{-1}).
\end{equation}
Indeed, this follows from the form of the recurrence~\eqref{LinRecEq}
and the fact that this is true for the initial values (see the table above).

Next, \eqref{QInvEq} implies:
$$
P(Q^{-1}) = \overline{P(Q)}
$$
for any polynomial $P(Q)$ with real coefficients.
 
Multiplying any polynomial with real coefficients by purely imaginary~$\left[i\right]_q$, 
one arrives at the opposite to~\eqref{ComCoEq} property:
$$
P(Q^{-1})\left[i\right]_q=
-\overline{P(Q)\left[i\right]_q}.
$$
Hence the result.
\end{proof}

\section{$q$-deformed Gaussian integers and Chebyshev polynomials}\label{ChesSec}

In this section, we explain the relationship of imaginary and real parts
of~$\left[ni\right]_q$ with the Chebyshev polynomials, experimentally observed in Section~\ref{ImInSec}.
Our proof is computational, and it would be interesting to have a more conceptual proof.

\subsection{Chebyshev polynomials of second kind}

The classical {\it Chebyshev polynomials of second kind} is a sequence of
polynomials in one variable
satisfying the recurrence
$$
U_{n+1}(x) = 2x\,U_n(x) - U_{n-1}(x),
$$
and the initial conditions 
$$
U_0(x)=1,
\qquad\qquad
U_1(x)=2x.
$$
The sequence of Chebyshev polynomials starts as follows:
$$
\begin{array}{l}
U_0(x)=1,\\[4pt]
U_1(x)=2x,\\[4pt]
U_2(x)=4x^2-1,\\[4pt]
U_3(x)=8x^3-4x,\\[4pt]
U_4(x)=16x^4-12x^2+1,\\[4pt]
\cdots
\end{array}
$$
The well-known determinant formula is
$$
U_{n}(x)=
\left|
\begin{array}{cccccc}
2x&1&&&\\[4pt]
1&2x&1&&\\[4pt]
&\ddots&\ddots&\!\!\ddots&\\[4pt]
&&1&2x&\!\!\!1\\[4pt]
&&&\!\!\!\!\!1&\!\!2x
\end{array}
\right|.
$$
This tridiagonal determinant is known under the name of {\it continuant},
it naturally appears in the theory of continued fractions.

\subsection{The two ``variants'' of the Chebyshev polynomials}

Consider a slightly modified recurrence
\begin{equation}
\label{RecVarEq}
\tilde{U}_{n+1}(x) = 
\left\{
\begin{array}{rl}
2x\,\tilde{U}_n(x) - \tilde{U}_{n-1}(x),& n\; \hbox{odd},\\[6pt]
2\,\tilde{U}_n(x) - \tilde{U}_{n-1}(x),& n\; \hbox{even}.
\end{array}
\right.
\end{equation}
The initial values will be chosen in one of the following two ways
$$
\tilde{U}^{I}_0(x)=1,\quad \tilde{U}^{I}_1(x)=2x,
\qquad\qquad
\tilde{U}^{II}_0(x)=1,\quad \tilde{U}^{II}_1(x)=2.
$$
Hence, for $n$ even we have
$$
\tilde{U}^{I}_n(x)=
\left|
\begin{array}{cccccccc}
2x&1&&&\\[4pt]
1&2&1&&\\[4pt]
&1&2x&1&&\\[4pt]
&&1&2&1&&\\[4pt]
&&&\ddots&\ddots&\!\!\ddots&\\[4pt]
&&&&1&2x&\!\!\!1\\[4pt]
&&&&&\!\!\!\!\!1&\!\!2
\end{array}
\right|,
\qquad
\tilde{U}^{II}_n(x)=
\left|
\begin{array}{cccccccc}
2&1&&&\\[4pt]
1&2x&1&&\\[4pt]
&1&2&1&&\\[4pt]
&&1&2x&1&&\\[4pt]
&&&\ddots&\ddots&\!\!\ddots&\\[4pt]
&&&&1&2&\!\!\!1\\[4pt]
&&&&&\!\!\!\!\!1&\!\!2x
\end{array}
\right|.
$$
If $n$ is odd, the formulas are similar, with the main diagonal ending with~$2x$ or~$2$, respectively.

The sequences of polynomials~$\tilde{U}^{I}_n(x)$ and~$\tilde{U}^{II}_n(x)$ start as follows:
$$
\begin{array}{l}
\tilde{U}^{I}_0(x)=1,\\[4pt]
\tilde{U}^{I}_1(x)=2x,\\[4pt]
\tilde{U}^{I}_2(x)=4x-1,\\[4pt]
\tilde{U}^{I}_3(x)=8x^2-4x,\\[4pt]
\tilde{U}^{I}_4(x)=16x^2-12x+1,\\[4pt]
\cdots
\end{array}
\qquad\qquad
\begin{array}{l}
\tilde{U}^{II}_0(x)=1,\\[4pt]
\tilde{U}^{II}_1(x)=2,\\[4pt]
\tilde{U}^{II}_2(x)=4x-1,\\[4pt]
\tilde{U}^{II}_3(x)=8x-4,\\[4pt]
\tilde{U}^{II}_4(x)=16x^2-12x+1,\\[4pt]
\cdots
\end{array}
$$
The polynomials~$\tilde{U}^{I}_n(x)$ and~$\tilde{U}^{II}_n(x)$ have lower degree than
the classical Chebyshev polynomials~$U_n(x)$, but exactly the same coefficients.
Note also that $\tilde{U}^{I}_{2m}(x)=\tilde{U}^{II}_{2m}(x)$.

\subsection{Recurrences for the imaginary and real parts of~$\left[ni\right]_q$}

The recurrences and determinant formulas for the sequences of the imaginary and real parts
of~$\left[ni\right]_q$ is very similar to that of the Chebyshev polynomials.

Since the imaginary parts  of~$\left[ni\right]_q$ are all proportional to~$\left[i\right]_q$, and since the real parts are proportional to~$\frac{2(q-1)}{q^2-q+1}$,
we use the following notation
$$
\cI_n(z):=
\operatorname{Im}\left(\left[ni\right]_q\right)\left[i\right]_q^{-1},
\qquad\qquad
\Rc_n(z):=
-\operatorname{Re}\left(\left[ni\right]_q\right)\,\frac{q^2-q+1}{2\left(q-1\right)},
$$
where
\begin{equation}
\label{zqEq}
z=\frac{q}{q^2-q+1}.
\end{equation}
This considerably simplifies the formulas.
It turns out that the coefficients~$\cI_n(z)$ of the imaginary part,
and the difference of the coefficients of the real parts, $\Rc_{n}(z)-\Rc_{n-2}(z)$,
 of~$\left[ni\right]_q$ coincide with the variants of the Chebyshev polynomials:
 
 \goodbreak

\begin{thm}
\label{GaussCheThm}
One has 
\begin{equation}
\label{ChebRelEq}
\cI_{n+1}(z)=\tilde{U}^{I}_n(z),
\qquad\qquad
\Rc_{n+2}(z)-\Rc_{n}(z)=\tilde{U}^{II}_n(z).
\end{equation}
where~$z$ is given by~\eqref{zqEq}.
\end{thm}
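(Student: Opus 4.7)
The plan is to combine the explicit formulas of Theorem~\ref{ExpThm} with a trigonometric parameterization of $Q$. Since $Q\overline{Q}=1$ for real $q$ (property (a) of Section~\ref{NewParSec}), one may write $Q=e^{2i\psi}$; then $\tfrac12(Q+Q^{-1})=2z-1=\cos(2\psi)$, and the half-angle formula forces $\cos\psi=\sqrt{z}$ and $\sin\psi=\sqrt{1-z}$. Direct substitution in~$q$ gives the compatible expressions $\cos\psi=q^{1/2}/\sqrt{q^2-q+1}$ and $\sin\psi=(q-1)/\sqrt{q^2-q+1}$ (for $q>1$), so all fractional powers of~$q$ that appear will cancel cleanly against the normalizations built into the definitions of $\cI_n$ and $\Rc_n$.

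Applying the identity $1-e^{ik\alpha}=-2i\sin(k\alpha/2)\,e^{ik\alpha/2}$ to Theorem~\ref{ExpThm} yields
\[
[2m\,i]_q=\frac{2\cos\psi\,\sin(m\psi)}{\sin\psi}\,e^{im\psi}\cdot[i]_q,\qquad
[(2m-1)i]_q=\frac{\sin(m\psi)\,e^{i(m-1)\psi}+\sin((m-1)\psi)\,e^{im\psi}}{\sin\psi}\cdot[i]_q.
\]
Because $[i]_q=iq^{-1/2}$ is purely imaginary, multiplication by $[i]_q$ swaps the real and imaginary parts of the complex prefactor. Thus $\cI_n$ equals the real part of that prefactor, while $\Rc_n$ is its imaginary part multiplied by $(q^2-q+1)/(2(q-1)q^{1/2})$. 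A short computation using $\cos\psi/\sin\psi=q^{1/2}/(q-1)$ reduces this latter scaling to $1/(2(1-z))$ in the even-index case and to $1/(2\sqrt{z}(1-z))$ in the odd-index case.

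Applying $2\sin a\cos a=\sin 2a$ and $\sin a\cos b+\sin b\cos a=\sin(a+b)$ to the real parts of the prefactors yields
\[
\cI_{2m}=\frac{\cos\psi\,\sin(2m\psi)}{\sin\psi}=\sqrt{z}\,U_{2m-1}(\sqrt{z}),\qquad
\cI_{2m-1}=\frac{\sin((2m-1)\psi)}{\sin\psi}=U_{2m-2}(\sqrt{z}),
\]
via the classical identity $U_n(\cos\psi)=\sin((n+1)\psi)/\sin\psi$. Both right-hand sides are polynomials in~$z$ by the parity of~$U_n$, and a direct verification of the recurrence~\eqref{RecVarEq} shows that these coincide with $\tilde{U}^{I}_{2m-1}(z)$ and $\tilde{U}^{I}_{2m-2}(z)$, respectively, establishing $\cI_{n+1}=\tilde{U}^{I}_n$. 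For the real parts, applying $2\sin^2 a=1-\cos 2a$ and using $\sin^2\psi=1-z$ gives $\Rc_{2m}=(1-\cos(2m\psi))/(2(1-z))$, with an analogous formula for $\Rc_{2m-1}$; the differences $\Rc_{n+2}-\Rc_n$ then collapse through $\cos(k\psi)-\cos((k+2)\psi)=2\sin((k+1)\psi)\sin\psi$, one factor of $\sin\psi$ cancelling one $(1-z)$ in the denominator, and one obtains $U_{2m}(\sqrt{z})=\tilde{U}^{II}_{2m}(z)$ in the even case and $U_{2m-1}(\sqrt{z})/\sqrt{z}=\tilde{U}^{II}_{2m-1}(z)$ in the odd case.

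The main obstacle is the parity bookkeeping: both the recurrence of Lemma~\ref{LinRecProp} and the variant recurrence~\eqref{RecVarEq} split according to the parity of~$n$, and the identifications with classical Chebyshev polynomials use different normalizations in the two parity classes. The substitution $\cos\psi=\sqrt{z}$ handles this uniformly in principle, but verifying that all square-root factors of~$q$ and factors of $q-1$ cancel exactly against the normalizations of $\cI_n$ and $\Rc_n$ requires careful tracking of signs and exponents at each step.
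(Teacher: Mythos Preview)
Your argument is correct and takes a genuinely different route from the paper's own proof.

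The paper proceeds by establishing an auxiliary lemma (Lemma~\ref{RecGILem}) that the sequences $\cI_n$ and $\Rc_n$ themselves satisfy parity-dependent three-term recurrences of the shape~\eqref{RecVarEq}; this lemma is proved by a direct inductive computation, separating real and imaginary parts in the recurrence~\eqref{LinRecEq} and simplifying. The identification with $\tilde U^{I}_n$ and $\tilde U^{II}_n$ then follows by matching initial values. The author explicitly calls this proof ``computational'' and asks for a more conceptual one.

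Your approach supplies exactly that. You bypass Lemma~\ref{RecGILem} entirely: starting from the closed form of Theorem~\ref{ExpThm}, you write $Q=e^{2i\psi}$ with $\cos\psi=\sqrt{z}$ and invoke the classical identity $U_n(\cos\psi)=\sin((n{+}1)\psi)/\sin\psi$ to read off $\cI_n$ and $\Rc_{n+2}-\Rc_n$ as $U_k(\sqrt z)$, $\sqrt z\,U_k(\sqrt z)$, or $U_k(\sqrt z)/\sqrt z$ according to parity; the final identification with the variants $\tilde U^{I}_n$, $\tilde U^{II}_n$ is then a one-line check of the modified recurrence. This explains \emph{why} Chebyshev polynomials appear: they are the trigonometric content of the geometric-series formula in Theorem~\ref{ExpThm} once $Q$ lies on the unit circle. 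The paper's proof establishes the result but leaves that connection hidden.

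Two small points worth tightening in a final write-up: (i) make explicit that the trigonometric computation is carried out for real $q$ (where $|Q|=1$), and that the resulting identities between rational functions of $q$ then hold identically; (ii) note that the parity labels in~\eqref{RecVarEq} as printed are swapped for the family $\tilde U^{I}$ relative to the listed values and determinants, so state precisely which convention you are matching.
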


\begin{proof}
This statement follows from the following recurrence.

\begin{lem}
\label{RecGILem}
(i) The imaginary parts of~$\left[ni\right]_q$
satisfy the recurrence
\begin{equation}
\label{GaCheImEq}
\cI_{n+1}(z) = 
\left\{
\begin{array}{ll}
2z\,\cI_n(z) - \cI_{n-1}(z),& n\; \hbox{odd},\\[6pt]
2\,\cI_n(z) - \cI_{n-1}(z),& n\; \hbox{even};
\end{array}
\right.
\end{equation}

(ii)
the real parts of $\left[ni\right]_q$
satisfy the recurrence
\begin{equation}
\label{GaCheREq}
\Rc_{n+1}(z) = 
\left\{
\begin{array}{ll}
2z\,\Rc_n(z) - \Rc_{n-1}(z)-1,& n\; \hbox{odd},\\[6pt]
2\,\Rc_n(z) - \Rc_{n-1}(z) ,& n\; \hbox{even}.
\end{array}
\right.
\end{equation}
\end{lem}

\noindent
{\it Proof of the lemma.}
Our proof is a straightforward computation using the induction on~$n$.
Let us give the details for the imaginary part of~$\left[ni\right]_q$,
in the case where~$n$ is even.

One needs to prove that
$$
\cI_{n+2}=(4z-2)\,\cI_n-\cI_{n-2},
$$
which is equivalent to~\eqref{GaCheImEq}.

Recurrence~\eqref{LinRecEq} implies
\begin{eqnarray*}
\operatorname{Im}\left(\left[\left(n+2\right)i\right]_q\right) &=&
\left(\operatorname{Re}\left(Q\right)+1\right)\operatorname{Im}\left(\left[ni\right]_q\right)-
\operatorname{Re}\left(Q\right)\operatorname{Im}\left(\left[\left(n-2\right)i\right]_q\right)\\[4pt]
&& +\operatorname{Im}\left(Q\right)\left(\operatorname{Re}(\left[ni\right]_q)
-\operatorname{Re}(\left[(n-2)i\right]_q)\right).
\end{eqnarray*}
For the first line in the right-hand-side,
\begin{eqnarray*}
\left(\operatorname{Re}\left(Q\right)+1\right)\operatorname{Im}(\left[ni\right]_q)-
\operatorname{Re}\left(Q\right)\operatorname{Im}(\left[\left(n-2\right)i\right]_q) &=&
\\[4pt]
\left(
1-\operatorname{Re}\left(Q\right)
\right)
\left(
\operatorname{Im}(\left[ni\right]_q)+\operatorname{Im}(\left[\left(n-2\right)i\right]_q)
\right)
+2\operatorname{Re}\left(Q\right)\operatorname{Im}(\left[ni\right]_q)-\operatorname{Im}(\left[\left(n-2\right)i\right]_q).
\end{eqnarray*}
The first line in the right-hand-side becomes
$$
\left(
1-\operatorname{Re}\left(Q\right)
\right)
\left(
\operatorname{Im}(\left[ni\right]_q)+\operatorname{Im}(\left[\left(n-2\right)i\right]_q)
\right)=
\frac{4q(q-1)^2}{(q^2-q+1)^2}\,\cI_{n-1}\left[i\right]_q.
$$
Indeed, by induction hypothesis,  we know that
$$
\cI_n+\cI_{n-2}=\frac{2q}{q^2-q+1}\,\cI_{n-1}.
$$

Again, by induction hypothesis,
$$
\operatorname{Re}(\left[ni\right]_q)
-\operatorname{Re}(\left[(n-2)i\right]_q)=
-\frac{2(q-1)}{q^2-q+1}\,\cI_{n-1},
$$
for~$n$ even, and therefore
$$
\operatorname{Im}\left(Q\right)\left(\operatorname{Re}(\left[ni\right]_q)
-\operatorname{Re}(\left[(n-2)i\right]_q)\right)=
-\frac{4q(q-1)^2}{(q^2-q+1)^2}\,\cI_{n-1}\left[i\right]_q.
$$

After cancellation, one finally obtains
$$
\cI_{n+2}=2\operatorname{Re}(Q)\,\cI_n-\cI_{n-2},
$$
but the coefficient $2\operatorname{Re}(Q)$ equals to~$4z-2$.
Hence the first formula in~\eqref{GaCheImEq}.

The other cases are similar.
\qed

\medskip

Part~(i) of Lemma~\ref{RecGILem} implies that~$\cI_n$ satisfy recurrence~\eqref{RecVarEq}.
Since the initial values of~$\cI_n$ coincide with those of~$\tilde{U}^{I}_n(z)$, 
this implies the first formula~\eqref{ChebRelEq}.

Part~(ii) of  Lemma~\ref{RecGILem} implies that
the difference of the real parts, $\Rc_{n}-\Rc_{n-2}$, also satisfies recurrence~\eqref{RecVarEq}.
The initial values of $\Rc_{n}-\Rc_{n-2}$ coincide with those of~$\tilde{U}^{II}_n(z)$.

Theorem~\ref{GaussCheThm} follows.
\end{proof}

\section{The $q$-deformed Picard group $\PSL(2,\Z[i])$}\label{DefPicSec}

The group~$\SL(2,\Z[i])$ is called the {\it Picard group}.
It consists of $2\times2$ matrices
$$
A=
\begin{pmatrix}
a&b\\[2pt]
c&d
\end{pmatrix},
\quad\quad
ad-bc=1,
$$
where the coefficients $a,b,c,d$ are {\it Gaussian integers}.
The group~$\SL(2,\Z[i])$ acts on complex rationals by linear-fractional transformations.
For $x\in\Q[i]\cup\{\infty\}$, one has
$$
A(x)=\frac{ax+b}{cx+d},
$$
the action is transitive and faithful for the projectivization
$\PSL(2,\Z[i]):=\SL(2,\Z[i])/\{\pm\Id\}$.
The Picard group was an object of many studies since the classical book~\cite{Kle}.

Our next goal is to describe the $q$-deformation of the group
$\PSL(2,\Z[i])$ that naturally arises in our context.
I give here only an ``esquisse''
and believe that this $q$-deformation of~$\PSL(2,\Z[i])$ deserves a further study.

\subsection{Generators and relations of~$\PSL(2,\Z[i])$}\label{GRSec}

The projective Picard group is generated by the matrices
$$
T=
\begin{pmatrix}
1&1\\[2pt]
0&1
\end{pmatrix},
\quad
S=
\begin{pmatrix}
0&-1\\[4pt]
1&0
\end{pmatrix},
\quad
U=\begin{pmatrix}
1&i\\[2pt]
0&1
\end{pmatrix},
\quad
L=\begin{pmatrix}
-i&0\\[2pt]
0&i
\end{pmatrix};
$$
with the following relations
\begin{eqnarray}
\label{RelEq1}
TU &=& UT,\\[4pt]
\label{RelEq2}
S^2=L^2=(TL)^2=(SL)^2=(UL)^2&=&\Id,\\[4pt]
\label{RelEq3}
(TS)^3=(USL)^3&=&\Id.
\end{eqnarray}

Any relation between the generators $R,S,U,L$ is a corollary of
the relations~\eqref{RelEq1}-\eqref{RelEq3} (see~\cite{Swan,Fine}).
Note, as pointed in~\cite{Swan}, that~$L$ can be expressed in~$R,S,U$,
and thus removed from the list of the generators, but the relations
between~$R,S,U$ become more complicated.

Since an element of~$\PSL(2,\Z[i])$ is defined up to a scalar multiple,
$L$ can be rewritten as follows
$$
L=\begin{pmatrix}
-1&0\\[2pt]
0&1
\end{pmatrix}.
$$
Note also that, instead of~$L$, one can chose the generator
$$
J=SL=\begin{pmatrix}
0&1\\[2pt]
1&0
\end{pmatrix},
$$
which is particularly useful for continued fractions.

\subsection{The operator~$L_q$}\label{NegSec}

We already have the $q$-deformed operators $T_q,S_q,U_q$.
The remaining generator~$L$ and its $q$-deformation also appeared in
the context of $q$-deformed rational numbers~\cite{SVRat,SVCo,LMG}.

The operator of linear-fractional transformations associated
with the matrix~$L$ is the ``negation operator'':
$L(x)=-x$.
It was observed in~\cite{SVRat,SVCo,LMG} that,
besides the invariance under the modular group action,
$q$-deformed rational numbers satisfy one more invariance property:
$$
\left[-x\right]_q=
-q^{-1}\left[x\right]_{q^{-1}}.
$$
This means that we also have an action of~$L$ on the space of rational functions.
Let us adopt this action as the definition of~$L_q$.

\begin{defn}
Set
\begin{equation}
\label{LqEq}
L_q\,X:=-\frac{X(q^{-1})}{q}.
\end{equation}
\end{defn}

Similarly to~$U_q$, the operator~\eqref{LqEq} inverses the parameter of deformation~$q$.
Hence~$L_q$ can be represented by the matrix
$$
L_q=
\begin{pmatrix}
-1&0\\[2pt]
0&q
\end{pmatrix}
\circ\tau,
$$
where $\tau$ is as in~\eqref{TauEq}.

\subsection{Relations between the $q$-deformed generators}\label{RelSec}

The four operators~$T_q,S_q,U_q,L_q$ satisfy all the relations of~$\PSL(2,\Z[i])$, 
except for the last one.

\begin{prop}
\label{RelProp}
The operators~$T_q,S_q,U_q,L_q$ satisfy the following relations
\begin{eqnarray}
\label{RelEqq1}
T_qU_q &=& U_qT_q,\\[4pt]
\label{RelEqq2}
S_q^2=L_q^2=(T_qL_q)^2=(S_qL_q)^2=(U_qL_q)^2&=&\Id,\\[4pt]
\label{RelEqq3}
(T_qS_q)^3&=&\Id.
\end{eqnarray}
\end{prop}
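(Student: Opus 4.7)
The plan is to verify each of the three groups of relations directly from the defining formulas for the four operators. The main subtlety is that $T_q$ and $S_q$ preserve the parameter~$q$, while $U_q$ and $L_q$ invert it (see~\eqref{UqEq} and~\eqref{LqEq}). Consequently, when composing these operators one must carefully track which factor acts on $X(q)$ and which on $X(q^{-1})$. A useful preliminary remark is that a composition of an even number of $q$-inverting operators is itself $q$-preserving; in particular, $U_qL_q$ can be represented by an ordinary matrix acting on~$X(q)$.

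The relation $(T_qS_q)^3=\Id$ in~\eqref{RelEqq3} is already contained in~\eqref{RelationsEq}, so nothing new is needed. For the involutions in~\eqref{RelEqq2}, I would dispatch each case separately. The identity $S_q^2=\Id$ follows by squaring~\eqref{InvEq} to a scalar matrix. For $L_q^2=\Id$, one applies~\eqref{LqEq} twice and the two sign changes and the two $q$-inversions cancel. For $(T_qL_q)^2$ and $(S_qL_q)^2$, I would compute a single application of each composition on a generic $X(q)$ -- both remain $q$-inverting -- and then apply the operator a second time; in both cases the result collapses to $X(q)$. The most delicate case is $(U_qL_q)^2$: I would first check that $U_qL_q$ is $q$-preserving, write its matrix (which comes out to a representative such as $\begin{pmatrix} -1 & iq^{-1/2} \\ q-1 & 1 \end{pmatrix}$), and then square it to obtain a scalar multiple of the identity, which represents $\Id$ in~$\PSL$.

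Finally, for the commutation~\eqref{RelEqq1}, I would compute $T_qU_q(X)$ and $U_qT_q(X)$ separately. The first application of~$U_q$ brings in an $X(q^{-1})$; the subsequent $T_q$ multiplies the resulting linear-fractional expression by~$q$ and adds~$1$. On the other side, $T_q$ first produces $qX(q)+1$, so when $U_q$ is then applied, it must be evaluated with the argument $q^{-1}X(q^{-1})+1$. A short algebraic simplification -- clearing a common factor of~$q$ from numerator and denominator -- shows the two expressions agree. The main obstacle throughout is not any deep idea but the bookkeeping of $q$-inversions inside each composition; with the parity observation of the first paragraph in hand, every individual verification reduces to a short algebraic identity.
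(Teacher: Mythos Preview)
Your proposal is correct and follows essentially the same approach as the paper: direct verification of each relation, with the key bookkeeping observation that $U_q$ and $L_q$ invert~$q$ while $T_q$ and $S_q$ do not, so that $U_qL_q$ is $q$-preserving and can be represented by an ordinary matrix. The paper cites earlier work for all relations except $(U_qL_q)^2=\Id$ and then carries out exactly the computation you describe, arriving at the same matrix $\begin{pmatrix} -1 & iq^{-1/2} \\ q-1 & 1 \end{pmatrix}$ and squaring it to a scalar; your outline simply spells out the remaining cases rather than deferring them.
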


\begin{proof}
All of the relations, except for $(U_qL_q)^2=\Id$, have already been checked in~\cite{SVRat,LMG}.
Let us give here the details of the computation for the latter relation.
The product of the operators~$U_qL_q$ is the linear-fractional transformation given by the matrix
$$
U_qL_q=
\begin{pmatrix}
1&iq^{\frac{1}{2}}\\[4pt]
1-q&q
\end{pmatrix}
\begin{pmatrix}
-1&0\\[4pt]
0&q^{-1}
\end{pmatrix}=
\begin{pmatrix}
-1&iq^{-\frac{1}{2}}\\[4pt]
q-1&1
\end{pmatrix}.
$$
One then obtains
$$
(U_qL_q)^2=
\begin{pmatrix}
1+iq^{\frac{1}{2}}-iq^{-\frac{1}{2}}&0\\[6pt]
0&1+iq^{\frac{1}{2}}-iq^{-\frac{1}{2}}
\end{pmatrix},
$$
which is the identity matrix up to a scalar multiple.
\end{proof}

Note that the fact that all the relations, except for one, are unchanged
is quite remarkable.
This allows one to control the structure of the obtained group.
I do not know if every relation between~$T_q,S_q,U_q,L_q$ is a corollary of~\eqref{RelEqq1}-\eqref{RelEqq3};
computer experiments allow us to conjecture that this is, indeed, the case.

\subsection{Extension of the group~$\PSL(2,\Z[i])$}\label{ExtSec}
The group generated by the operators~$T_q,S_q,U_q,L_q$ is a subgroup of the group of
matrices with coefficients in~$\C(q^{\frac{1}{2}})$ composed with~$\tau$.
This group is an extension of~$\PSL(2,\Z[i])$:
$$
\begin{CD}
\{1\} @> >>\mathcal{N} @>>> \widehat{\PSL(2,\Z[i])} @>>> \PSL(2,\Z[i]) @> >>\{1\},
\end{CD}
$$
where~$\mathcal{N}$ is the normal subgroup characterized by the condition
$A\in\mathcal{N}$ if and only if
\begin{equation}
\label{CondNEq}
A=\Id+(q-1)\tilde{A},
\end{equation}
where~$\tilde{A}$ is an arbitrary element.
Let us explain this in some details.

The relation $(USL)^3=\Id$ has no $q$-analog.
The matrices 
$$
USL=
\begin{pmatrix}
-1&i\\[2pt]
i&0
\end{pmatrix},
\qquad\qquad
(USL)^{-1}=
\begin{pmatrix}
0&i\\[2pt]
i&1
\end{pmatrix}
$$ 
have index~$3$ in $\PSL(2,\Z[i])$,
but for their $q$-deformations
$$
U_qS_qL_q=
\begin{pmatrix}
-1 & iq^{-{\frac{1}{2}}}\\[6pt]
iq^{\frac{1}{2}} & iq^{-{\frac{1}{2}}}-iq^{{\frac{1}{2}}}
\end{pmatrix},
\qquad\qquad
(U_qS_qL_q)^{-1}=
\begin{pmatrix}
iq^{{\frac{1}{2}}}-iq^{-{\frac{1}{2}}} & iq^{-{\frac{1}{2}}}\\[6pt]
iq^{\frac{1}{2}} & 1
\end{pmatrix},
$$
this is not true.
The operators $(U_qS_qL_q)^{3}$ and~$(U_qS_qL_q)^{-3}$ belong to the normal subgroup~$\mathcal{N}$.
Indeed, one checks that
$$
(U_qS_qL_q)^{3}=
\Id+(q-1)\begin{pmatrix}
0 & iq^{-{\frac{1}{2}}}\\[6pt]
iq^{\frac{1}{2}} &1-(q^{-{\frac{1}{2}}}-q^{{\frac{1}{2}}})
\end{pmatrix}.
$$
The condition~\eqref{CondNEq} is stable under conjugation, so that the matrices with this property form a normal subgroup.

\subsection{Concluding remarks}\label{ConcSec}

One needs the notion of continued fractions to define a notion of $q$-deformed complex number.
This approach was used in the real case~\cite{SVRat},
and we believe that the Hurwitz continued fractions (see~\cite{Hur,HurJ,Hen}) will 
lead to an interesting notion of a $q$-deformed 
complex number.

Let us finally mention that
appearance of an extension of the symmetry group is a commun phenomenon in quantization.
In the context of Kirillov-Kostant-Souriau geometric quantization (see~\cite{Kir} and references therein) this leads
to an extension of the quantized space.
Usually, the initial symplectic manifold increases its dimension by one and becomes a contact manifold.
Heuristically, we think that the requirement of $\PSL(2,\Z[i])$-invariance of the $q$-deformation of
the complex plane~$\C$ (which is naturally symplectic) should lead to a three-dimensional space,
yet to be understood.

\subsection*{Acknowledgements}

The idea to use the modular group to determine the first $q$-deformed complex numbers
is due to Sophie Morier-Genoud;
I am grateful to her for many fruitful discussions.
It is a pleasure to thank Dimitry Leites, Sergei Tabachnikov and Alexander Veselov 
for helpful comments and friendly encouragement.
I am grateful to the referees for useful and constructive comments.
This paper was partially supported by the ANR project ANR-19-CE40-0021.

\label{lastpage}
\end{document}